\numberwithin{equation}{section}
\theoremstyle{plain}
\newtheorem{thm}{Theorem}[section]
\newtheorem{lem}[thm]{Lemma}
\newtheorem{cor}[thm]{Corollary}
\theoremstyle{definition}
\newtheorem{rem}[thm]{Remark}
\newtheorem{exam}[thm]{Example}
\renewcommand{\d}{{\rm d}}
\newcommand{\R}{\mathbb{R}}
\newcommand{\bE}{\mathbb{E}}
\begin{document}

\title{Quasi-ergodic theorems for Feynman--Kac semigroups and large deviation for additive functionals}

\author{Daehong Kim\footnote{The first named author is partially supported by a Grant-in-Aid for
		Scientific Research (C)  No.~23K03152 from Japan Society for the Promotion of Science.},~ Takara Tagawa and Aur\'elien Velleret}
	
	\date{\empty}

\maketitle
\begin{abstract}
We study the long-time behavior of an additive functional that takes into account the jumps of a symmetric Markov process. This process is assumed to be observed through a biased observation scheme that includes the survival to events of extinction and the Feynman--Kac weight by another similar additive functional.  
Under conditioning for the convergence to a quasi-stationary distribution and for two-sided estimates of the Feynmac--Kac semigroup to be obtained, we shall discuss general assumptions on the symmetric Markov process. For the law of additive functionals, we will prove a quasi-ergodic theorem, namely a conditional version of the ergodic theorem and a conditional functional weak law of large numbers. As an application, we also establish a large deviation principle for the mean ratio of additive functionals.

\begin{center}
	\textbf{Résumé}
\end{center}
Nous étudions le comportement à long terme d'une fonctionnelle additive qui tient compte des sauts d'un processus de Markov symétrique. Ce processus est supposé être observé au travers d'un schéma d'observation biaisé qui inclut la survie à des événements d'extinction et la pondération de Feynman--Kac via une autre fonctionnelle additive similaire.  
Sous des conditions permettant la convergence vers une distribution quasi-stationnaire et autorisant des estimations bilatérales du semigroupe de Feynmac--Kac, nous examinerons des hypothèses générales pour le processus de Markov symétrique. Pour la loi des fonctionnelles additives, nous prouverons un théorème quasi-ergodique, à savoir une version conditionnelle du théorème ergodique, et une loi faible des grands nombres, sous une version fonctionnelle et conditionnelle. Comme application, nous établissons également un principe de grande déviation pour le ratio moyen des fonctionnelles additives.
\end{abstract}

\textbf{Keywords:} Additive functionals;
Feynman--Kac semigroups; Large deviation principle; L\'evy system; Quasi-ergodic theorem; Quasi-stationary/ergodic distributions

\textbf{MSC:} 37A30;  60J45 (Primary); 
60G52; 47D06; 60J46 (Secondary)

\section{Introduction}\label{sec:intro}  

Let $E$ be a locally compact separable metric space and $\mathfrak{m}$ be a positive Radon measure on $E$ with full topological support. 
Let $X=(\Omega, \mathcal M, \mathcal M_t, X_t, \mathbb{P}_x, \zeta)$ 
be a strong Markov process on $E$ with the augmented filtration $\mathcal M_t$ and the lifetime $\zeta:=\inf\{t>0 \mid X_t=\partial\}$, where $\partial$ is the cemetery point for $E$.
We use $\mathbb{E}_x$ to denote the expectation with respect to $\mathbb{P}_x$ for any $x \in E$.

Insightful properties of such a Markov process $X$ when conditioned to survive can already be derived by the study of the long-time behavior of the mean ratio of a continuous additive functional, as studied by several authors
(\cite{BR:1999, HYZ, Kaleta:2023, ZhangLiSong:2014}).
In this paper, we focus exclusively on processes $X$ that are almost surely killed.
Notably, Breyer and Roberts \cite{BR:1999} established when  
$\mathfrak{m}(E)<\infty$ and $X$ is positive $\lambda$-recurrent for some constant $\lambda \le 0$ that the following quasi-ergodic limit theorem holds: there exists a (probability) measure $\eta$ on $E$ such that for any $f \in L^1(E;\eta)$ and $x\in E$,
\begin{equation}\label{QEDintro}
	\lim_{t\to \infty}{\mathbb{E}}_x\left[\frac{1}{t}\int_0^tf(X_s){\rm d}s ~\Big|~ t<\zeta\right] =\int_E f(x)\eta({\rm d}x).
\end{equation}
The measure $\eta$ is often called a quasi-ergodic distribution of $X$. Zhang et al.~\cite{ZhangLiSong:2014} also established \eqref{QEDintro} under some conditions that are easier to check than positive $\lambda$-recurrence and $\mathfrak{m}$ a finite measure, and discussed the weak conditional law of large numbers for the continuous additive functional $\int_0^tf(X_s){\d}s$.

The quasi-ergodic distribution is closely linked to the long-time behavior of the transition semigroup $p_t$ of $X$ and to its principal eigenfunction (called the {\it ground state}). In this respect,
Takeda \cite{Takeda:2019-CPT} proved that if $X$ is $\mathfrak{m}$-symmetric 
and possesses the properties {\bf (I)}, {\bf (SF)} and {\bf (T)} (see Section 2 for their definitions), then the transition semigroup $p_t$ of $X$ is a compact operator on $L^2(E;\mathfrak{m})$ for any $t>0$, every eigenfunction has a bounded continuous version, and the ground state $\psi_0$ can be taken to be  strictly positive on $E$. Further, Takeda \cite{Takeda:2019-QSD} proved that if, in addition, $X$ is explosive, then 
the measure $\psi_0\mathfrak{m}/\int_E\psi_0 {\d}\mathfrak{m}$ is a unique quasi-stationary distribution of $X$ 
by showing that $\psi_0 \in L^1(E;\mathfrak{m})$.
Based on this setting, He et al. \cite{HYZ} established \eqref{QEDintro} even when $\mathfrak{m}(E)=\infty$ and also proved a weak conditional law of large numbers. 
\vskip 0.1cm
Throughout this paper, 
we assume that $X$ is $\mathfrak{m}$-symmetric in the sense of \eqref{msym}. 
Let $\mu=\mu^+-\mu^-$ be a signed smooth measure on $E$ 
whose associated continuous additive functional of $X$ is denoted by $A_t^\mu=A_t^{\mu^+}-A_t^{\mu^-}$. 
Let $F^+$ and $F^-$ be positive symmetric bounded measurable functions 
on $E\times E$ vanishing on the diagonal. Set $F:=F^+ -F^-$.
Then, it defines a purely discontinuous additive functional of $X$ by   
\begin{equation}
	A^{F}_t:=A_t^{F^+} 	-A_t^{F^-}:=\sum_{0<s\le t}F^+(X_{s-},X_s) - \sum_{0<s\le t}F^-(X_{s-},X_s)
\end{equation}
whenever it is summable. This additive functional often appears when considering the pure jump effects in Markov processes and, in a particular case, is thought to express the number of jumps in pure jump processes. 
It is natural to consider the following additive functional of the form $A_t^{\mu, F}:=A^\mu_t + A^F_t$. 
\vskip 0.1cm
In combination with the extinction, 
we are led to consider the following family of non-local Feynman--Kac transforms, indexed by a time $t$ which can be thought of as a final time for the biased action: for $x \in E$ and $\Lambda \in \mathcal M$,
\begin{equation}\label{genFKtansform}
	{\mathbb{P}}_{x:t}^{\mu, F}(\Lambda):={\mathbb{P}}_{x}\left(\Lambda \cdot W^{\mu, F}_t \right) \quad
	\text{where}~~ W^{\mu, F}_t := \exp \left(-A^{\mu, F}_t\right){\bf 1}_{\{t<\zeta\}}.
\end{equation}
It is useful to note that this bias extends the concept of extinction. With \eqref{genFKtansform}, we define a renormalized probability measure by
\begin{equation}\label{eq_def_PRxt}
	{\mathbb{P}}_{x|t}^{\mu, F}(\Lambda)
	=\frac{{\mathbb{P}}_{x:t}^{\mu, F}(\Lambda)}{{\mathbb{P}}_{x:t}^{\mu, F}(\Omega)}, \quad \text{for}~x \in E, ~\Lambda \in {\mathcal M}\,,
\end{equation} 
under the condition that the denominator is guaranteed to be finite. 
\vskip 0.1cm

Our main subject of this paper is to study a quasi-ergodic limit theorem for the additive functional \eqref{AF_AVG} below under the symmetric Markov process $X$ driven by the Feynman--Kac transform \eqref{genFKtansform}. More precisely, we shall prove under the Kato class condition related to the functional $A^{\mu, F}$
and the assumptions {\bf (A1)} $\sim$ {\bf (A3)} (see in Section \ref{sec:prelim} below) that there exist measures $\eta$ on $E$ and ${\mathcal J}$ on $E\times E$ such that 
the following limit holds
for any $V \in L^1(E;\eta)$, any measurable function $G$ on $E\times E$ vanishing on the diagonal and satisfying $\iint_{E\times E}|G(y,z)|{\mathcal J}({\d}y{\d}z) <\infty$, and any $x \in E$:
\begin{align}\label{muFQET}
\lim_{t\to \infty}{\mathbb{E}}_{x|t}^{\mu, F}\left[
\frac{1}{t}A_t^{V,G}\right] = \int_E V(x)\eta({\d}x) + \iint_{E\times E}G(x,y){\mathcal J}({\d}x{\d}y),
\end{align}  
where ${\mathbb{E}}_{x|t}^{\mu, F}$ denotes the expectation with respect to ${\mathbb{P}}_{x|t}^{\mu, F}$ and
\begin{align}\label{AF_AVG}
A_t^{V,G}:=\int_0^t V(X_s){\d}s +\sum_{0<s\le t}G(X_{s-},X_s).
\end{align}
The quasi-ergodic limit \eqref{muFQET}
gives us not only an extension for the Feynman--Kac scheme of  \eqref{QEDintro} but also the quasi-ergodicity caused by the pure jump effects for symmetric Markov processes that have not been previously discussed in other literature. 
Using \eqref{muFQET}, 
we further obtain the quasi-ergodic limit for second moment 
of $(1/t)\cdot\!A_t^{V,G}$ (see Lemmas~\ref{Moment2}, \ref{JumpWLLN})
and establish in Theorem~\ref{VGWLLN}
the weak conditional law of large numbers 
for the additive functional \eqref{AF_AVG} under ${\mathbb{P}}_{x|t}^{\mu, F}$: 
for any $\varepsilon >0$,
\begin{equation}
\lim_{t \to \infty}{\mathbb{P}}_{x|t}^{\mu, F}\left(\left|\frac{1}{t}A_t^{V,G} - \left(\int_E V(x)\eta(\d x)+\iint_{E\times E}G(y,z){\mathcal J}({\rm d}y{\rm d}z)\right)\right| \ge \varepsilon\right)=0.
\label{eq_VGWLLN}
\end{equation} 
It is known that any symmetric Markov process can be transformed into an ergodic process by some multiplicative functional (\cite[Chapter 6]{FOT}). To prove our results, we will apply this fact to transform a symmetric Markov process with Feynman--Kac weights into an ergodic process by a multiplicative functional called a ground state transform. Then, by using the Fukushima ergodic theorem for the semigroup of the transformed process, we give some ergodic limits for Feynman--Kac semigroups (Lemma \ref{QED1}), which play a crucial role in our proofs.
\vskip 0.1cm

The derived weak law of large number provides a new approach to derive a large deviation principle. Existing results on a large deviation principle for additive functionals are however numerous (see \cite{CT, DZ:1998, Remillard:2000, TT:2007,TT:2011}). In particular, Chen and Tsuchida \cite{CT} recently established a large deviation principle for pairs of continuous and purely discontinuous additive functionals under very general framework of symmetric Markov processes. Our Feynman--Kac scheme for the weak conditional law of large numbers  for $A_t^{V,F}$
helps to establish a large deviation principle for the mean ratio $(1/t)\cdot\!A_t^{V,G}$ 
by using a rate function with a more direct expression (see Theorem \ref{LDPforVF}). 
\vskip 0.1cm
The remainder of this paper is arranged as follows. In Section \ref{sec:prelim}, we give the setup with some assumptions. We also recall some definitions and known results that will be used in the rest of the paper. Section \ref{sec:QEforFK} is devoted to quasi-ergodic theorems (cf. Theorem~\ref{JumpErgodic})
and to the conditional functional weak law of large numbers (cf. Theorem~\ref{VGWLLN}). 
The large deviation principle for $(1/t)\cdot\!A_t^{V,G}$ is studied in Section \ref{sec:LDP}. In the last section, we give some examples satisfying our assumptions.

\section{Preliminaries}\label{sec:prelim}  

Let $E$ be a locally compact separable metric space with Borel $\sigma$-field $\mathcal B(E)$ and $\mathfrak{m}$ be a positive Radon measure on $E$ with full topological support. In the sequel, let $X=(\Omega, \mathcal M, \mathcal M_t, X_t, \mathbb{P}_x, \zeta)$ be a strong Markov process on $E$ with the lifetime $\zeta:=\inf\{t>0 \mid X_t=\partial\}$ of $X$. Denote by $\{p_t\}_{t\ge 0}$ and $\{R_\alpha\}_{\alpha \ge 0}$ the transition semigroup and resolvent of $X$, respectively: $p_tf(x)=\mathbb{E}_x[f(X_t)\, ;\, t<\zeta]$ and $R_\alpha f(x)=\int_0^\infty e^{-\alpha t}p_tf(x)\,{\d}t$ for $f \in \mathfrak{B}_b(E)$, where $\mathfrak{B}_b(E)$ denotes the space of all bounded Borel functions on $E$. We assume that $X$
is $\mathfrak{m}$-symmetric in the sense that:
\begin{align}\label{msym}
\int_E p_t f(x) \cdot g(x)\mathfrak{m}({\rm d}x)
=\int_E f(x) \cdot p_t g(x)\mathfrak{m}({\rm d}x), 
\quad \text{for any}~t>0~ \text{and}~ f, g \in L^2(E; \mathfrak m)\cap \mathfrak{B}_b(E).
\end{align} 

The transition kernel of $X$ is defined to be $p_t(x,{\d}y)={\mathbb{P}}_x(X_t \in {\d}y)$. 
It is known that $\{p_t\}_{t\ge 0}$ uniquely determines a strongly continuous Markovian semigroup $\{T_t\}_{t\ge 0}$ on $L^2(E;\mathfrak{m})$ (\cite[Lemma 1.4.3]{FOT}). Using this, we define the Dirichlet form $({\mathcal E},{\mathcal D}({\mathcal E}))$ on $L^2(E;\mathfrak{m})$ generated by $X$:
\begin{align*}
&{\mathcal D}({\mathcal E})=\left\{u \in L^2(E;\mathfrak{m})~\Big|~\lim_{t\to 0}\frac{1}{t}(u-T_tu, u)_{\mathfrak{m}} <\infty \right\} \\
&{\mathcal E}(u,v)=\lim_{t\to 0}\frac{1}{t}(u-T_tu, v)_{\mathfrak{m}}, \quad u,v \in {\mathcal D}({\mathcal E}).
\end{align*}

A Borel set $B \subset E$ is called $p_t$-invariant if $p_t(1_B\cdot f)=1_B\cdot p_tf$ 
$\mathfrak{m}$-a.e. for any $f \in L^2(E;\mathfrak{m})\cap  \mathfrak{B}_b(E)$ and $t>0$. 
We say that $X$ is irreducible ({\bf (I)} in short) if any $p_t$-invariant Borel set $B \subset E$ satisfies either $\mathfrak{m}(B)=0$ or $\mathfrak{m}(E\setminus B)=0$. The process $X$ (or $p_t$) is said to have strong Feller property ({\bf (SF)} in short) if $p_t(\mathfrak{B}_b(E)) \subset C_b(E)$ for any $t>0$, where $C_b(E)$ denotes the space of all bounded continuous functions on $E$. Further, the process $X$ is said to satisfy the absolute continuity condition ({\bf (AC)} in short) if the
transition kernel $p_t(x, {\rm d}y)$ of $X$ is absolutely continuous with respect to $\mathfrak{m}$, that is, there exists a transition density (heat kernel) $p_t(x,y)$ such that $p_t(x, {\rm d}y) =
p_t(x, y)\mathfrak{m}({\rm d}y)$ for any $x,y \in E$ and $t > 0$. We remark that {\bf (SF)} implies {\bf (AC)}. We say that $X$ possesses a tightness property ({\bf (T)} in short) if for any $\varepsilon >0$ there exists a compact set $K \subset E$ such that $
\sup_{x \in E}R_1{\bf 1}_{K^c}(x) \le \varepsilon$. There are many examples of symmetric Markov processes satisfying the properties {\bf (I)}, {\bf (SF)} and {\bf (T)} (see \cite{Takeda:2019-CPT,Takeda:2019-QSD}). By definition, $X$ is explosive if $\mathbb{P}_x(\zeta <\infty)>0$ holds for any $x\in E$. It is known that {\bf (T)} implies a strong recurrence if $X$ is conservative and a fast explosion if $X$ is explosive. In particular, {\bf (T)} implies that $X$ is almost surely killed, that is, $\mathbb{P}_x(\zeta <\infty)=1$ for any $x \in E$, if $X$ is explosive (cf. \cite[Section 4.1]{Takeda:2013-Tight}). 

A positive continuous additive functional (PCAF in short) $A_t$ of $X$ is called a PCAF in the strict sense if it admits a defining set $\Lambda$ with $\mathbb{P}_x(\Lambda)=1$ for all $x \in E$. In other words, the empty set $\emptyset$ can be taken as an exceptional set of $A_t$.
A Borel measure $\nu$ on $(E, \mathfrak{B}(E))$ is said to be smooth if $\nu$ charges no set of zero capacity and there exists a generalized nest $\{E_n\}$ of closed sets such that $\nu(E_n) < \infty$ for each $n \in \mathbb{N}$. 
We strengthen this condition under {\bf (AC)}. We may then extend $R_1$ to act on smooth Borel measure $\nu$ via $R_1\nu(x) = \int_E r_1(x, y)\, \nu(\d y)$, where $r_1$ is the unique jointly measurable function such that $R_1 f(x) = \int_E r_1(x, y)\, f(y) \mathfrak m(\d y)$ for any $f\in  \mathfrak{B}_b(E)$, (see \cite[Lemma 4.2.4 and Exercise 4.2.2]{FOT}).
Let $S_{00}$ be the collection of Borel measures $\nu$ on $E$ such that $\nu(E) <\infty$ and  $R_1\nu \in L^\infty(E;\mathfrak{m})$. We say that a Borel measure $\nu$ on $E$ is smooth in the strict sense if there exists a sequence $\{E_n\}$ of Borel sets increasing to $E$ such that ${\bf 1}_{E_n}\cdot \nu \in S_{00}$ for each $n\in \mathbb{N}$ and $\mathbb{P}_x (\lim_{n\to \infty}\tau_{E_n} \ge \zeta )=1$ for any $x \in E$, where $\tau_{E_n}:=\inf\{t>0 \mid X_t \notin E_n\}$. 

It is known that the family of equivalence classes of the set of PCAFs in the strict sense of $X$ and the family of positive smooth measures in the strict sense are in one-to-one correspondence under the Revuz correspondence (\cite[Theorem 5.1.7]{FOT}): for any $t>0$ and any Borel measurable function $f : E \to [0,\infty)$, there exists a positive smooth measure $\nu$ on $(E, \mathfrak{B}(E))$ such that
\begin{align}\label{Revuz}
	\int_{E}f(x)\nu({\d}x) = \lim_{t \to 0}\frac{1}{t}\int_E {\mathbb{E}}_x \left[\int_0^tf(X_s)\,{\d}A_s\right]\mathfrak{m}({\d}x). 
\end{align}
In this case, the measure $\nu$ is called the Revuz measure of $A_t$. We will write the PCAF $A_t$ of $X$ associated with $\nu$ as $A_t^\nu$ to emphasize the correspondence between $A_t$ and $\nu$.

\vskip 0.1cm
Let us denote by $\mathcal K(E\times E)$ the set of Borel measurable functions on $E\times E$ vanishing on the diagonal.
Let $(N(x, {\d}y), H_t)$ be a L\'evy system for $X$ (see \cite[Theorem A.3.21]{FOT}), that is, $N(x, {\d}y)$ is a kernel on $(E, {\mathfrak B}(E))$ and $H_t$ is a PCAF with bounded $1$-potential such that for any nonnegative function $K\in \mathcal K(E\times E)$
and for any $x\in E$, 
\begin{align*}
{\mathbb{E}}_x\left[\sum_{0<s\le t} K(X_{s-}, X_s),~ t<\zeta \right] 
={\mathbb{E}}_x\left[\int^t_0\!\int_{E} K(X_s, y) N(X_s,{\d}y) {\d}H_s,~ t<\zeta\right].
\end{align*}
$H$ being a PCAF, the jump times of $X$ do not contribute to the integral on the right-hand side of the above equation, so both $K(X_s, y)$ and $K(X_{s-}, y)$ are taken into account.
To simplify notation, we will write for such a kernel $N$:
\begin{equation}\label{eq_def_NK}
N[K f](x):= \int_{E} K(x, y)f(y) N(x, {\d}y),
\end{equation}
for any nonnegative Borel measurable function $f$ on $E$.
Let $\mu_H$ be the Revuz measure of the PCAF $H_t$. Then the jumping measure $J$ and the killing measure $\kappa$ of $X$ are given by $J({\d}x{\d}y)=\frac12  N(x, {\d}y)\mu_H ({\d}x)$ and $\kappa({\d}x)=N(x, \{\partial\})\mu_H({\d}x)$. 
These measures feature in the Beurling-Deny decomposition of $\mathcal E$: 
for $u, v\in {\mathcal D(\mathcal E)}$, 
$$
{\mathcal E}(u, v)={\mathcal E}^{c}(u, v)+\int_{E\times E}(\widetilde{u}(x)- \widetilde{u}(y))(\widetilde{v}(x)- \widetilde{v}(y))J({\d}x{\d}y)+\int_E \widetilde{u}(x)\widetilde{v}(x)\kappa({\d}x),
$$
where ${\mathcal E}^{c}$ is the strongly local part of ${\mathcal E}$ and $\widetilde{w}$ denotes the quasi continuous version of $w \in {\mathcal D}({\mathcal E})$ (\cite[Theorem 5.3.1]{FOT}). 
\vskip 0.1cm
A signed smooth measure $\nu:=\nu^+ -\nu^-$ on $E$ in the strict sense is said to be in the Kato class associated to $X$ ($\nu \in \mathcal S_K^1(X)$ in notation), if it satisfies that 
\begin{equation*}
\lim_{t\to 0}\sup_{x \in E}{\mathbb{E}}_x\left[\left|A_t^{\nu}\right|\right]=0,
\end{equation*}
where $|A_t^\nu|=A_t^{\nu^+} + A_t^{\nu^-}$. For various examples of measures belonging to the Kato class, including those induced by jumping functions, we refer to \cite{CKK, CS, KimKuwae:TAMS, KKT:2016}.
\vskip 0.2cm
Let $\mu$ be a signed smooth measure on $E$ such that $\mu \in \mathcal S_K^1(X)$. Let $F(x,y)$ be a symmetric (i.e., $F(x,y)=F(y,x)$) bounded
function in $\mathcal K(E\times E)$ satisfying
$N[|F|] \mu_H\in \mathcal S_K^1(X)$. Then, it defines 
an additive functional of $X$ by 
\begin{align*}
	A^{\mu, F}_t = A^{\mu}_t + A^{F}_t, \quad A^{F}_t = \sum_{0<s\le t}F(X_{s-},X_s)
	\end{align*}  
	for any $t\ge0$, in which the second component $A^{F}$ is purely discontinuous.
\vskip 0.2cm
Now we make the following assumptions: 
\begin{enumerate}
\item[{\bf (A1)}] $\mu_H = \mathfrak{m}$, so that $H_t = t$ for any $t<\zeta$.
\item[{\bf (A2)}] The process $X$ is explosive and possesses the properties {\bf (I)}, {\bf (SF)} and {\bf (T)}.
\end{enumerate}
\begin{rem}
{\rm 
\begin{enumerate}
	\item[(1)] By a suitable choice of a L\'evy system $(N(x,{\d}y), H_t)$, {\bf (A1)} captures the case of L\'evy processes (e.g. \cite[ Chapter VIII $\S$73]{Sha:Book}) and assuming $\mu_H({\d}x) = f(x) \mathfrak{m}({\d}x)$ for $f \in L^1(E; \mathfrak{m})$	would not extend the scope of our results.
	\item[(2)] It follows from \cite{Takeda:2019-CPT} that under {\bf (A2)}, the transition semigroup $p_t$ of $X$ is a compact operator on $L^2(E;\mathfrak{m})$ for any $t>0$. Thus, there is a unique minimizing function (called a ground state) $\psi_0 \in {\mathcal D}({\mathcal E})$ such that $\int_E \psi_0^2 {\d}\mathfrak{m}=1$ and 
		\begin{align}\label{GS0}
			{\mathcal E}(\psi_0, \psi_0)=\inf \left\{{\mathcal E}(u,u) : u \in {\mathcal D}({\mathcal E}), \int_E u^2{\d}\mathfrak{m}=1 \right\}.
		\end{align}
		In particular, $\psi_0$ can be taken to be strictly positive and has a bounded continuous version on $E$. Further, it was proved in \cite{Takeda:2019-QSD} that $\psi_0 \in L^1(E;\mathfrak{m})$ and the measure $\psi_0\mathfrak{m}/\int_E\psi_0 {\d}\mathfrak{m}$ is a unique quasi-stationary distribution of $X$.
\end{enumerate}
}
\end{rem}
\vskip 0.2cm
Let us give some examples of symmetric Markov processes satisfying the properties {\bf (I)}, {\bf (SF)} and {\bf (T)}:

\begin{exam}\label{ex2}
{\rm Let $X=(X_t, \mathbb{P}_x)$ be the symmetric $\alpha$-stable process in $\R^d$ with $0 <\alpha \le 2$ and $d\ge 1$, that is, $X$ is a conservative L\'evy process whose characteristic function is given by $\exp (-t|\xi|^{\alpha})~ (\xi \in \R^d)$. 
Let ${\mathcal D}$ be the family of open (not necessarily bounded and connected) sets in $\R^d$. Define
\begin{align}\label{Thin}
{\mathcal D}_0:=\left\{D \in {\mathcal D} ~\Big|~ \lim_{x \in D, |x|\to \infty}m(D\cap B(x,1))=0\right\},
\end{align}
where $m$ denotes the Lebesgue measure on $\R^d$ and $B(x,1)$ the open ball with center $x \in \R^d$
and radius $1$. A set $D \in {\mathcal D}_0$ is called thin at infinity in some literature. 
\begin{enumerate}
\item[(1)] For $D \in {\mathcal D}_0$, denote by $T_D:=\inf\{t>0 \mid X_t \notin D\}$, the first exit time of $X_t$ from $D$. Adjoin an extra point $\partial$ to $D$ and set $X_t^D= X_t$ for $T_D >t$
and $X_t^D= \partial$ for $T_D \le t$.
The process $X^D=(X_t^D, \mathbb{P}_x)$ is called the absorbing $\alpha$-stable process on $D$ and satisfies {\bf (I)}, {\bf (SF)} and {\bf (T)} (\cite[Lemma 2.1]{KT}).
\item[(2)] Let $V$ be a positive function on $\R^d$. Define the level set of $V$ by $D_M:=\{x \in \R^d \mid V(x) \le M\},$ for $M>0$.

Suppose that $V m \in \mathcal S_K^1(X)$ 
and $D_M \in {\mathcal D}_0$ for any $M>0$. Then, the subprocess $X^{V}$ of $X$ by $e^{-\int_0^tV(X_s){\d}s}$ satisfies {\bf (I)}, {\bf (SF)} and {\bf (T)} (cf. \cite[Theorem 5.2]{TTT:2017}).
\end{enumerate}
}
\end{exam}

Let us consider a non-local Feynman--Kac transform $\exp (A^{\mu, F}_t)$ by the additive functional $A_t^{\mu, F}$.
The Feynman--Kac weight acting on the trajectory up to time $t$ 
is then defined as follows:
\begin{equation*}
W^{\mu, F}_t:= {\exp} \left(-A^{\mu, F}_t\right)\mathbf{1}_{\{ t<\zeta \}}. 
\end{equation*}
With this specific form of $W^{\mu, F}_t$,
we we define the Feynman--Kac semigroup $\{p_t^{\mu, F}\}_{t\ge 0}$ by 
\begin{align}\label{FKuFmu}
p_t^{\mu, F} f(x):=
{\mathbb{E}}_x\left[W^{\mu, F}_t f(X_t)\right], \quad f \in \mathfrak{B}_b(E)\,.
\end{align} 
The facts that $\mu \in \mathcal S_K^1(X)$ and $N[|F|]\mathfrak m \in \mathcal S_K^1(X)$
entail that ${\mathbb{E}}_x[W^{\mu, F}_t]<\infty$ for any $x\in E$, $t>0$
(cf. \cite[Proposition 2.3]{CS03a}, as well as our later proof of Lemma~\ref{lem_AF_id}). 
The property is equivalent to ${\mathbb{P}}_{x:t}^{\mu, F}(\Omega)<\infty$ (recall \eqref{eq_def_PRxt}).
Thus $p_t^{\mu, F} f\in \mathfrak{B}_b(E)$ for any $f \in \mathfrak{B}_b(E)$ and $t>0$, and the conditional expectation $\mathbb E_{x|t}^{\mu, F}$ is well-defined. 
It follows from {\bf (A2)} that $p_t^{\mu, F}$ also satisfies {\bf (I)} and {\bf (SF)} (cf. \cite[Corollary 3.2]{KKT:2016} or \cite{KimKuwae:TAMS}). Thus, $p_t^{\mu,F}$ admits a symmetric integral kernel $p_t^{\mu,F}(x,y)$ such that $p_t^{\mu,F}(x,{\d}y)=p_t^{\mu,F}(x,y)\mathfrak{m}({\d}y)$ for any $x,y \in E$ and $t>0$.
\vskip 0.2cm

Let $\mathcal E^{\mu, F}$ be the symmetric bilinear form on $\mathcal D(\mathcal E)\times \mathcal D(\mathcal E)$ defined by 
\begin{align*}
\mathcal E^{\mu, F}(u,v)&:={\mathcal E}(u,v) + \int_Eu(x)v(x)\mu({\d}x) + \iint_{E\times E}u(x)v(y)\left(1-e^{-F(x,y)}\right)N(x,{\d}y)\mathfrak{m}({\d}x).
\end{align*}
In view of Stollmann--Voigt's inequality, $\mathcal E^{\mu, F}(u,v)$ is well-defined under $\mu, N[F]\mathfrak m\in\mathcal S_K^1(X)$
(cf. \cite[(4.5)]{CS03a}). Now, we define the bottom of the spectrum of $(\mathcal E^{\mu, F}, \mathcal D(\mathcal E))$ by
\begin{align}\label{specbot}
\lambda_0=\lambda_0(\mu,F):=\inf\left\{\mathcal E^{\mu, F}(u,u) : u \in \mathcal D(\mathcal E), \int_E u^2{\d}\mathfrak{m}=1 \right\}.
\end{align}
\vskip 0.2cm 
The following result on the existence of a ground state plays a crucial role in the present paper.

\begin{thm}{\rm (\cite[Corollary 1.1]{KKT:2016})}\label{GS}
There is a unique minimizing function (called a ground state) $\phi_0:=\phi_0^{\mu, F}$ in \eqref{specbot}, that is, there exists $\phi_0 \in \mathcal D(\mathcal E)$ such that $\int_E\phi_0^2(x)\mathfrak{m}({\d}x)=1$ and 
$\lambda_0=\mathcal E^{\mu, F}\left(\phi_0, \phi_0\right). $
\end{thm}
\noindent
Note that $\phi_0$ can be taken to be strictly positive due to {\bf (I)} of $\{p_t^{\mu, F}\}_{t\ge 0}$. However, 
unlike the case of $\psi_0$, we cannot guarantee the boundedness and integrability for $\phi_0$ without further assumption. 
\vskip 0.2cm
Now we further assume the following:
\begin{enumerate}
\item[{\bf (A3)}] The Feynman-Kac semigroup $\{p_t^{\mu, F}\}_{t\ge 0}$ is intrinsically ultracontractive ((IUC) in short), that is, there exists a constant $c_t >0$ such that
\begin{align*}
p_t^{\mu, F}(x,y) \le c_t \phi_0(x)\phi_0(y)
\end{align*}
for all $t>0$ and $x,y \in E$.
\end{enumerate}
\noindent
{\bf (A3)} ensures that 
$e^{\lambda_0 t} p_t^{\mu, F}(\phi_0) = \phi_0$ and that $\phi_0 \in L^1(E;\mathfrak{m}) \cap L^\infty (E;\mathfrak{m})$
(see the proof of \cite[Theorem 5.3]{TT:2013}). Further, by {\bf (SF)} of $p_t^{\mu, F}$, the ground state $\phi_0$ also has a continuous version.   
\vskip 0.2cm
The following is known as the Fukushima ergodic theorem (see \cite{Fuku:1982,Schmidt:2022,Takeda:2019-QSD}). 

\begin{thm}\label{FukuET}
Let $X$ be an $\mathfrak{m}$-symmetric irreducible conservative Markov process on $E$. Assume that $\mathfrak{m}(E)<\infty$. Then, for $f \in L^p(E;\mathfrak{m})$, $1<p<\infty$
\begin{align}\label{FETlim}
\lim_{t\to \infty}p_tf(x)=\frac{1}{\mathfrak{m}(E)}\int_E f\,{\rm d}\mathfrak{m} \quad \mathfrak{m}\text{-a.e. $\!x\in E$~and~in}~L^p(E;\mathfrak{m}).
\end{align}
\end{thm}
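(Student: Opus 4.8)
We may and do rescale $\Fm$ so that $\Fm(E)=1$; the claim then reads $p_tf\to\int_E f\,\d\Fm$ in $L^1(E;\Fm)$ and $\Fm$-a.e. Since $X$ is conservative, $p_t\mathbf 1=\mathbf 1$, so the constant $\mathbf 1$ (which lies in $L^2(E;\Fm)$ because $\Fm$ is finite) belongs to $\NN:=\{u\in L^2(E;\Fm):T_tu=u\ \text{for all}\ t>0\}$, where $\{T_t\}$ is the symmetric strongly continuous Markovian contraction semigroup on $L^2(E;\Fm)$ determined by $\{p_t\}$, with nonnegative self-adjoint generator $L$ (the operator associated with $\E$); note $\NN=\ker L$. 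Because $\Fm$ is finite we have the continuous inclusions $L^\infty(E;\Fm)\subset L^2(E;\Fm)\subset L^1(E;\Fm)$, so it suffices to prove the $L^2/L^1$ statement for $f\in L^2(E;\Fm)$ and the $\Fm$-a.e.\ statement for $f\in L^\infty(E;\Fm)$.

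\emph{Step 1 ($L^2$- and $L^1$-convergence, and identification of the limit).} By the spectral theorem for $L$, $T_tf\to Pf$ in $L^2(E;\Fm)$ as $t\to\infty$, where $P$ is the orthogonal projection onto $\NN$. The essential point is that irreducibility forces $\NN=\R\cdot\mathbf 1$. Indeed, if $u\in\NN$ then $u\in\D(\E)$ and $\E(u,u)=\lim_{t\to0}t^{-1}(u-T_tu,u)_\Fm=0$; truncating, it suffices to treat bounded $u$, and adding a constant (permitted since $p_t\mathbf 1=\mathbf 1$) we may assume $u\ge0$. For $a\ge0$ the normal contraction $(u-a)^+$ again has zero energy, hence $(u-a)^+\in\NN$, and likewise $u\wedge a\in\NN$. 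Since $(u-a)^+\ge0$ vanishes on $\{u\le a\}$ and $T_t(u-a)^+=(u-a)^+$, one gets $p_t\mathbf 1_{\{u>a\}}=0$ on $\{u\le a\}$, and symmetrically, for all but countably many $a$, $p_t\mathbf 1_{\{u\le a\}}=0$ on $\{u>a\}$; together with conservativeness this says that each such $\{u>a\}$ is $p_t$-invariant for every $t>0$. By {\bf (I)}, $\Fm(\{u>a\})\in\{0,1\}$ for a.e.\ $a$, so $u$ is $\Fm$-a.e.\ constant. Therefore $Pf=(f,\mathbf 1)_\Fm\,\mathbf 1=\int_E f\,\d\Fm$, giving $p_tf\to\int_E f\,\d\Fm$ in $L^2(E;\Fm)$, hence in $L^1(E;\Fm)$.

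\emph{Step 2 ($\Fm$-a.e.\ convergence).} Put $g:=f-\int_E f\,\d\Fm\in L^\infty(E;\Fm)$, so $\|p_tg\|_\infty\le\|g\|_\infty$ for all $t$ and $\|p_tg\|_2\to0$ by Step 1; it remains to show $p_tg\to0$ $\Fm$-a.e. The plan is to combine this with a maximal inequality. Stein's maximal ergodic theorem for symmetric Markovian (conservative) semigroups gives $\big\|\sup_{t>0}|p_tg|\big\|_2\le C\|g\|_2<\infty$, so $g^*:=\sup_{t>0}|p_tg|$ is finite $\Fm$-a.e.; picking $t_k\to\infty$ along which $p_{t_k}g\to0$ $\Fm$-a.e.\ and applying the usual Banach-principle argument (write $g=(g-p_{t_k}g)+p_{t_k}g$, bound $\sup_{t}|p_t(g-p_{t_k}g)|$ by $(g-p_{t_k}g)^*$, and let $k\to\infty$) yields $\limsup_{t\to\infty}|p_tg|=0$ $\Fm$-a.e. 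Alternatively one may argue via Rota's dilation theorem: $p_1^{2n}$ is realised as an iterated conditional expectation, so reverse-martingale convergence gives $\Fm$-a.e.\ convergence of $p_nf$ along integers $n$ to the conditional expectation onto the tail (invariant) $\sigma$-field, which is $\Fm$-trivial by {\bf (I)}; the passage to real times then uses $p_tf-\int f\,\d\Fm=p_{t-\lfloor t\rfloor}\big(p_{\lfloor t\rfloor}f-\int f\,\d\Fm\big)$, the $L^\infty$-contraction property, and a maximal bound applied to $\sup_{k\ge n}|p_kf-\int f\,\d\Fm|\downarrow0$.

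\emph{Main obstacle.} Step 1 is routine spectral theory together with the standard dictionary between $L^2$-invariant functions and $p_t$-invariant sets. The real difficulty is Step 2: $L^2$-convergence does not by itself force $\Fm$-a.e.\ convergence, so one genuinely needs a maximal/ergodic input (Stein's theorem, or Rota's dilation) plus some care in upgrading from a subsequential, or discrete-time, statement to the full continuous-time limit.
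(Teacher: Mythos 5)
The paper does not prove Theorem~\ref{FukuET}: it records it as the Fukushima ergodic theorem and refers to \cite{Fuku:1982,Schmidt:2022,Takeda:2019-QSD}, so there is no internal proof to compare against, and your argument has to be judged on its own. It is a correct, essentially self-contained proof and follows the standard route in those references. Step~1 is the spectral-theoretic $L^2$-ergodic theorem combined with the normal-contraction trick turning $\E(u,u)=0$ into $p_t$-invariance of the level sets $\{u>a\}$ (and hence, by {\bf (I)}, constancy of $u$); this is precisely how irreducibility in the invariant-set sense is converted into triviality of $\ker L$. Step~2 supplies the genuinely nontrivial upgrade to $\Fm$-a.e.\ convergence, and you correctly isolate that an external maximal/ergodic input is needed: both devices you invoke -- Stein's maximal theorem for symmetric Markovian contraction semigroups, and Rota's dilation plus reverse-martingale convergence -- do the job, and the Rota route is essentially Fukushima's own argument. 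Two small technicalities are left tacit but are not gaps: measurability of the uncountable supremum $\sup_{t>0}|p_tg|$, which one handles by restricting to a countable dense set of times and using $L^2$-continuity of $t\mapsto T_t g$ (or a jointly measurable version of $p_tg(x)$); and, in Step~1, the passage from ``each truncation $u\wedge n$ is a.e.\ constant'' back to ``$u$ itself is a.e.\ constant'', which follows because $u\in L^2(E;\Fm)$ is $\Fm$-a.e.\ finite.
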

\vskip 0.2cm
\begin{rem}\label{rem to FET}
{\rm In addition to the assumptions of Theorem \ref{FukuET}, one can see that
\begin{enumerate}
	\item[(1)] If $\{p_t\}_{t\ge 0}$ is ultracontractive ((UC) in short), that is, $c_t:=\|p_t\|_{1,\infty} <\infty$, then \eqref{FETlim} holds for $f \in L^1(E;\mathfrak{m})$. Here $\|p_t\|_{1,\infty}$ denotes the operator norm of $p_t$ from $L^1(E;\mathfrak{m})$ to $L^\infty(E;\mathfrak{m})$. The conservativeness of $\{p_t\}_{t\ge 0}$ guarantees that $c_t$ is nonincreasing as a function of $t$.
	\item[(2)] If $X$ satisfies {\bf (AC)}, then \eqref{FETlim} holds for any $f \in L^\infty(E;\mathfrak{m})$ and the phrase $\mathfrak{m}$-a.e. $x \in E$ in \eqref{FETlim} can be strengthened to all $x \in E$.
\end{enumerate}
}
\end{rem}

\section{Quasi-ergodic theorems associated to Feynman--Kac transforms}\label{sec:QEforFK}  
Throughout this section, we continue to assume that $\mu \in \mathcal S_K^1(X)$ and  $N[F]\mathfrak{m}\in \mathcal S_K^1(X)$. We further suppose that the assumptions {\bf (A1)} $\sim$ {\bf (A3)} hold. For any measurable functions $V$ on $E$ and $G\in \mathcal K(E\times E)$, let
\begin{align*}
A_t^{V,G}:=A_t^V + A_t^G \quad \text{where}~~ A_t^V:=\int_0^t V(X_s) {\d}s, ~\, A_t^G:= \sum_{0<s\le t} G(X_{s-}, X_s).
\end{align*}
Our first aim is to have bounds on the fluctuations of the process $(1/t)\cdot A_t^{V,G}$ under the conditional law $\mathbb{P}^{\mu, F}_{x|t}$ when $t$ tends to infinity. 

\subsection{First computations for the expectation and variance}

For notational brevity, we will consider the compensated semigroup: 
\begin{equation*}
\begin{split}
\widehat p_t^{\mu, F} f(x) := e^{\lambda_0 t} p_t^{\mu, F} f(x)=e^{\lambda_0 t}\mathbb{E}_x\left[W_t^{\mu, F} f(X_t)\right], \quad f \in \mathfrak{B}_b(E).
\end{split}
\end{equation*}
The survival factor under the Feynman-Kac scheme will be denoted as
\begin{equation*}
\varPhi_t(x) 
:= \widehat p_t^{\mu, F} \mathbf{1}(x)
= e^{\lambda_0 t} \mathbb{E}_{x}\left[W^{\mu, F}_t\right].
\end{equation*}
The functions $\widehat p_t^{\mu, F} f$ and $\varPhi_t$ are to be asymptotically close to the ground state $\phi_0$ up to a constant factor  as we will see in the next subsection. 
\vskip 0.2cm
There is no difficulty in proving the following equalities by the Markov property for any nonnegative
measurable function $V$ on $E$:
\begin{align}
&{\mathbb{E}}_{x|t}^{\mu, F}\left[A_t^V\right]= \varPhi_t(x)^{-1}\int_0^t \widehat p^{\mu, F}_s(V\cdot \varPhi_{t-s})(x) {\d}s. \label{V_dist} \\	
&{\mathbb{E}}_{x|t}^{\mu, F}\left[\left(A_t^V\right)^2 \right]= 2 \varPhi_t(x)^{-1}
\int_0^t \int_0^{t-s} \widehat p^{\mu, F}_s\left(V\cdot \widehat p^{\mu, F}_r(V\cdot \varPhi_{t-s-r})\right)(x) {\d}\, r{\d}s. \label{V_dist1}
\end{align}
The analogous results for the observable of jumps are stated in the next lemma. 
Note, in general, that if $\{U_t\}_{t\ge 0}$ is a predictable nonnegative process on $\Omega$ and $K$ is a nonnegative function in $\mathcal K(E\times E)$, then it holds that
\begin{equation}\label{eq_prop_NK}
{\mathbb{E}}_x\left[\sum_{0<s\le t}U_s\cdot K(X_{s-},X_s)\right]	={\mathbb{E}}_x\left[\int_0^t U_s \cdot N[K](X_s){\d}s\right] 
\end{equation}
for any $t>0$ and $x\in E$ (cf. \cite[p.346]{Sha:Book}),
where we recall the definition of $N[K]$ in \eqref{eq_def_NK}.
\begin{lem}\label{LevysysFK}
Let $G$ be a  nonnegative function in $\mathcal K(E\times E)$. 
Then, for any $t>0$ and $x \in E$
\begin{align}
&{\mathbb{E}}_{x|t}^{\mu, F}\left[A_t^G\right]= \varPhi_t(x)^{-1}\int_0^t \widehat p^{\mu, F}_s \left(N\left[G e^{-F}\cdot \varPhi_{t-s}\right]\right)(x) {\d}s, \label{tg}\\
&{\mathbb{E}}_{x|t}^{\mu, F}\left[\left(A_t^G\right)^2\right]
= \varPhi_t(x)^{-1}\int_0^t \widehat p^{\mu, F}_s\big(N\left[G^2 e^{-F} \cdot \varPhi_{t-s}\right]\big)(x) {\d}s \label{tg2}\\
&\qquad \qquad \qquad \quad + 2\varPhi_t(x)^{-1}\int_0^t \int_0^{t-s} \widehat p^{\mu, F}_s \left(N\Big[G e^{-F}\cdot \widehat p^{\mu, F}_r\big(N\left[G e^{-F}\cdot \varPhi_{t-s-r}\right]\big)\Big]\right)(x) \,{\d}r {\d}s. \nonumber 
\end{align}
\end{lem}

\begin{proof}
First, we prove \eqref{tg}. By the Markov property 
\begin{align*}
{\mathbb{E}}_{x|t}^{\mu, F}\left[A_t^G\right] &
= \varPhi_t(x)^{-1}{\mathbb{E}}_{x}\left[\sum_{0<s\le t}e^{\lambda_0 s} W^{\mu, F}_{s}
\mathbf{1}_{\{ s\neq \zeta \}} G(X_{s-},X_s)e^{\lambda_0(t-s)}{\mathbb{E}}_{X_s}\left[W_{t-s}^{\mu, F}\right]\right] \\
&= \varPhi_t(x)^{-1}{\mathbb{E}}_{x}\left[\sum_{0<s\le t}e^{\lambda_0 s} W^{\mu, F}_{s-}\mathbf{1}_{\{ s\neq \zeta \}} G(X_{s-},X_s) e^{-F(X_{s-},X_s)}\cdot \varPhi_{t-s}(X_s)\right],
\end{align*}
which leads to define: 
$\widetilde G_{t-s}(x, y) 
:= G(x, y)e^{-F(x,y)}\cdot \varPhi_{t-s}(y).$
Due to \eqref{eq_prop_NK}, we have
\begin{equation*}
{\mathbb{E}}_{x|t}^{\mu, F}\left[A_t^G\right]= \varPhi_t(x)^{-1}{\mathbb{E}}_{x}\left[\int_0^t e^{\lambda_0 s}W^{\mu, F}_{s} \cdot N\left[\widetilde G_{t-s}\right](X_s) {\d}s\right].
\end{equation*}
Since the term under the integral is nonnegative, the equality \eqref{tg} is directly deduced using the Fubini--Tonelli theorem
and the identification of both $\widetilde G_{t-s}$ and $\widehat p^{\mu, F}_{s}$.  
\vskip 0.1cm	
For proving \eqref{tg2}, we distribute the expression in the left-hand side:
\begin{align*}
{\mathbb{E}}_{x|t}^{\mu, F}\left[\left(A_t^G\right)^2\right] ={\mathbb{E}}_{x|t}^{\mu, F}\left[\sum_{0<s\le t}G^2(X_{s-},X_s)\right]+2\,{\mathbb{E}}_{x|t}^{\mu, F}\left[\sum_{0<s<s'\le t}G(X_{s-},X_s)G(X_{s'-},X_{s'})\right].
\end{align*} 
From the result \eqref{tg}, the term of the sum with a squared contribution  at the same jump event gives rise to the first term on the right-hand side of \eqref{tg2}, namely,
\begin{equation}\label{sq_sum}
{\mathbb{E}}_{x|t}^{\mu, F}\left[\sum_{0<s\le t}G^2(X_{s-},X_s)\right]= \varPhi_t(x)^{-1}
\int_0^t \widehat p^{\mu, F}_s\left(N\left[G^2 e^{-F} \cdot \varPhi_{t-s}\right]\right)(x) {\d}s.
\end{equation}
What remains is the following sum over the pairs of ordered jumps, which we also handle thanks to the Markov property in combination with the first part of the proof for \eqref{tg}:
\begin{align*}
&2\,{\mathbb{E}}_{x|t}^{\mu, F}\left[\sum_{0<s<s'\le t}G(X_{s-},X_s)G(X_{s'-},X_{s'})\right]\\
&\quad = 2\,\varPhi_t(x)^{-1} {\mathbb{E}}_{x}\Bigg[\sum_{0<s\le t} e^{\lambda_0 s} W^{\mu, F}_{s-}\mathbf{1}_{\{ s\neq \zeta \}} G(X_{s-},X_s) e^{-F(X_{s-}, X_s)}\\
&\hspace{1.5cm} 
\cdot \mathbb{E}_x\Big[\sum_{s<s'\le t}e^{\lambda_0 (s'-s)} \frac{W^{\mu, F}_{s'-}}{W^{\mu, F}_s}
\mathbf{1}_{\{ s'\neq \zeta \}}
G(X_{s'-},X_{s'}) e^{-F(X_{s'-}, X_{s'})}\cdot \varPhi_{t-s'}(X_{s'})~\Big|~\mathcal M_{s}\Big]
\Bigg]\\
&\quad =2\,\varPhi_t(x)^{-1} {\mathbb{E}}_{x}\Bigg[\sum_{0<s\le t}
e^{\lambda_0 s} W^{\mu, F}_{s-}\mathbf{1}_{\{ s\neq \zeta \}} G(X_{s-},X_s) e^{-F(X_{s-}, X_s)}
\int_0^{t-s} \widehat p^{\mu, F}_r\left(N\left[G e^{-F}\cdot \varPhi_{t-s-r}\right]\right)(X_s) {\d}r
\Bigg].
\end{align*}
With a similar argument, we can deduce that
\begin{equation*}
\begin{split}
	&	2\,{\mathbb{E}}_{x|t}^{\mu, F}\left[
	\sum_{0<s<s'\le t}G(X_{s-},X_s)G(X_{s'-},X_{s'})\right] \\
	&\quad = 2\,\varPhi_t(x)^{-1}\int_0^t \widehat p^{\mu, F}_s\left(N\left[G e^{-F}\cdot 
	\int_0^{t-s} \widehat p^{\mu, F}_r\left(N\left[G e^{-F}\cdot \varPhi_{t-s-r}\right]\right){\d}r\right]\right)(x) {\d}s.
	\end{split}\end{equation*}
	This concludes the proof of the equality \eqref{tg2} by recalling \eqref{sq_sum}.  
\end{proof}

\subsection{Pointwise in time convergence results}
We are now ready to handle the expectation and variance of $(1/t)\cdot\!A_t^{V,G}$ in the limit where $t$ tends to infinity.
We shall exploit the so-called Doob transform that relies on the definition of a biased semigroup of the following form
\begin{equation*}
p_t^{\phi_0}f(x) 
:=\frac{1}{\phi_0(x)} \widehat{p}_t^{\mu, F}(\phi_0 f)(x), \quad f \in \mathfrak{B}_b(E).
\end{equation*}
In view of \cite[Lemma 6.3.2]{FOT}, we know that this expression defines the semigroup of a $\phi_0^2\mathfrak{m}$-symmetric irreducible and conservative Markov process on $E$, namely $X^{\phi_0}=(X_t, \mathbb{P}_x^{\phi_0})$ (whose extinction time is thus infinite, i.e. $\zeta^{\phi_0} \equiv \infty$).
The fact that $\phi_0$ is the ground state of $p_t^{\mu, F}$ implies that the following equality holds for any $t\ge 0$:
\begin{equation*}
{\mathbb{P}}_x^{\phi_0}(\Lambda_t)=\int_{\Lambda_t} L_{t}^{\phi_0}(\omega){\mathbb{P}}_x({\d}\omega), \quad \Lambda_t \in \mathcal M_t,
\end{equation*}
where $L^{\phi_0}_t$ is a multiplicative functional (a so-called ground state transform) defined by 
\begin{equation*}
L_t^{\phi_0} = e^{\lambda_0 t}\frac{\phi_0(X_t)}{\phi_0(x)} W^{\mu, F}_t.
\end{equation*}
\smallskip

We note that, under {\bf (A2)} and {\bf (A3)},  $X^{\phi_0}$ satisfies {\bf (AC)} and that $\{p_t^{\phi_0}\}_{t\ge 0}$ is ultracontractive. So, by applying Theorem \ref{FukuET} (with Remark \ref{rem to FET}) to $X^{\phi_0}$, we see that for any $f \in L^1(E;\phi_0\mathfrak{m})$,
\begin{equation}\label{FKFET}
\lim_{t\to \infty}\widehat{p}_t^{\mu, F} f(x)=\lim_{t\to \infty}\phi_0(x)p_t^{\phi_0}\left(\frac{f}{\phi_0}\right)(x) 
=\phi_0(x)\int_E f(y)\phi_0(y)\mathfrak{m}({\d}y) \quad \text{for any}~x\in E.
\end{equation}
In particular, 
we obtain the following convergence:
\begin{equation}\label{vFt_cvg}
\lim_{t\rightarrow \infty} \varPhi_t(x) 
= \lim_{t\rightarrow \infty}\widehat p_t^{\mu, F} \mathbf{1}(x)
= \phi_0(x) \int_E \phi_0(y)\mathfrak{m}({\d}y)\quad \text{for any}~x\in E.
\end{equation}
Moreover, the following upper-bound holds
with $c_t := \|p^{\phi_0}_t\|_{1,\infty}<\infty$ for any $t>0$:
\begin{equation}\label{eq_phi_ratio}
\left\|\phi_0^{-1}\cdot \varPhi_t\right\|_{L^\infty(E;\phi_0^2 \mathfrak m)}
\le c_t\cdot \int_E \phi_0 {\rm d}\mathfrak m< \infty,
\end{equation}
where $\|p_t^{\phi_0}\|_{1,\infty}$ denotes the operator norm of $p_t^{\phi_0}$ from $L^1(E;\phi_0^2\mathfrak{m})$ to $L^\infty (E;\phi_0^2\mathfrak{m})$. In fact, since $\phi_0$ is itself bounded, $\sup_{t\ge 1} \|\varPhi_t\|_{L^\infty(E ; \phi_0^2 \mathfrak m)} < \infty$.
\vskip 0.1cm
Let $(N^{\phi_0}(x,{\d}y), H_t)$ be a L\'evy system for $X^{\phi_0}$. Then, it holds by {\bf (A1)} and \cite[Lemma 2.9]{CFTYZ} that
\begin{equation*}
N^{\phi_0}(x,{\d}y):=\frac{\phi_0(y)}{\phi_0(x)}e^{-F(x,y)}N(x,{\d}y), \quad H_t=t.
\label{def_Nphi}
\end{equation*}
Define the measures $\nu_{\phi_0}$ on $E$ and ${\mathcal J}_{\phi_0}$ on $E\times E$, respectively, by $\nu_{\phi_0} := \phi_0 \mathfrak{m} /\int_E \phi_0 {\d}\mathfrak{m}$ and
\begin{equation*}
{\mathcal J}_{\phi_0}({\rm d}x{\rm d}y):=\phi_0(x)\phi_0(y)e^{-F(x,y)}N(x,{\rm d}y)\mathfrak{m}({\rm d}x) = N^{\phi_0}(x, {\d}y) \phi_0^2(x)\mathfrak{m}({\rm d}x).
\end{equation*}
The symmetry of the jumping measure ${\mathcal J}_{\phi_0}$ translates as follows in our notations, 
for any nonnegative Borel measurable functions $f, g$ on $E$
and any nonnegative function $K\in \mathcal K(E\times E)$:
\begin{equation}\label{eq_sym_Jphi}
\int_E f(x)\, N^{\phi_0}[K\cdot g](x)\; \phi_0^2(x)\, \mathfrak{m}({\rm d}x)
= 	\int_E g(y)\, N^{\phi_0}[K^T\cdot f](y)\; \phi_0^2(y)\, \mathfrak{m}({\rm d}y)\,,
\end{equation}
with the transpose kernel denoted $K^T$, such that $K^T(x, y) = K(y, x)$ for any $x, y\in E$.
In particular, 
the $L^1(E; \phi_0^2\mathfrak{m})$-norm of $N^{\phi_0}[K]$ coincides with the one of $N^{\phi_0}[K^T]$. 
\vskip 0.1cm
First, we give the pointwise in time convergence results for the Feynman--Kac semigroup $\widehat p_t^{\mu, F}$. The assertions $(1)$ and $(2)$ of the following lemma are originally due to \cite[Theorem 3.2]{ZhangLiSong:2014} 
for $f, g \in L^1(E;\mathfrak{m})$ when $\mu=F=0$ and $\mathfrak{m}$ is a finite measure on $E$ 
(also, \cite[Theorem 3.1]{HYZ} for $f, g \in \mathfrak{B}_b(E)$ when $\mu=F=0$).    
Nonetheless, we need to modify these results 
for $f, g$  in $L^1(E;\phi_0^2\mathfrak{m})$ 
or $ L^1(E;\phi_0\mathfrak{m})$ under the Feynman--Kac scheme, 
because it is essential to derive our quasi-ergodic theorem
for the discontinuous additive functional induced by a jumping function $F$. 
We give the proofs for reader's convenience
and consider the new assertions $(3)$ and $(4)$ involving the L\'evy system.

\begin{lem}\label{QED1}
\begin{enumerate}
\item[{\rm (1)}]
For any $f \in L^1(E;\phi_0^2\mathfrak{m})$, $g \in L^1(E;\phi_0\mathfrak{m})$, $x\in E$ and $0 < p <1$, we have
\begin{equation*}
	\lim_{t\to \infty}\varPhi_t(x)^{-1}\,\widehat p^{\mu, F}_{pt}\left(f\cdot \widehat p^{\mu, F}_{(1-p)t}g\right)(x)
	=\int_E f\phi_0^2\d\mathfrak{m} \cdot \int_E g\d\nu_{\phi_0}.
\end{equation*}
\item[{\rm (2)}] For any $f, g \in L^1(E;\phi_0^2\mathfrak{m})$, $x \in E$ and $0 < p<q <1$, we have
\begin{equation*}
	\lim_{t\to \infty}\varPhi_t(x)^{-1}\, \widehat p^{\mu, F}_{p t}\left(f\cdot \widehat p^{\mu, F}_{(q-p)t}\left(g\cdot \varPhi_{(1-q) t}\right)\right)(x)=\int_E f\phi_0^2\d\mathfrak{m}\cdot \int_E g\phi_0^2\d\mathfrak{m}.
\end{equation*}

\item[{\rm (3)}]
For any $G\in \mathcal K(E\times E)$
such that $N^{\phi_0}[|G|]\in L^1(E;\phi_0^2\mathfrak{m})$, $g \in L^1(E;\phi_0\mathfrak{m})$, $x \in E$ and $0 < p<1$, we have
\begin{equation*}
	\lim_{t\to \infty}
	\varPhi_t(x)^{-1}\,\widehat p^{\mu, F}_{pt}\left(N\left[G e^{-F} \cdot\widehat p^{\mu, F}_{(1-p)t}g\right]\right)(x)  
	= \iint_{E\times E}G(y,z){\mathcal J}_{\phi_0}({\d}y{\d}z)\cdot \int_E g \d\nu_{\phi_0}.
\end{equation*}

\item[{\rm (4)}]
For any $G, K\in \mathcal K(E\times E)$
such that $N^{\phi_0}[|G|], N^{\phi_0}[|K|] \in L^1(E;\phi_0^2\mathfrak{m})$, $g \in L^1(E; \phi_0 \mathfrak{m})$, $x \in E$ and $0 < p<q<1$, we have,
\begin{align*}
	&\lim_{t\to \infty}\varPhi_t(x)^{-1}\,\widehat p^{\mu, F}_{pt}\Big(N\Big[G e^{-F} \cdot \widehat p^{\mu, F}_{(q-p)t}\left(N\left[Ke^{-F} \cdot \widehat p^{\mu, F}_{(1-q)t}g\right]\right)\Big]\Big)(x) \\
	&\qquad \quad = \iint_{E\times E}G(y,z){\mathcal J}_{\phi_0}({\d}y{\d}z) \cdot \iint_{E\times E} K(y,z){\mathcal J}_{\phi_0}({\d}y{\d}z)\cdot \int_E g {\rm d}\nu_{\phi_0}.
\end{align*}
\end{enumerate}
\end{lem}

\begin{rem}\label{remQSDexpress}{\rm 
Note the following equality for $0<p<q<1$ to better interpret the terms on the left-hand side of (1) and (2) of Lemma \ref{QED1} for which convergence is shown:
\begin{equation*}
	\begin{split}
		&\varPhi_t(x)^{-1}\,
		\widehat p^{\mu, F}_{pt}\left(f\cdot \widehat p^{\mu, F}_{(1-p)t}g\right)(x)
		= {\mathbb{E}}_{x|t}^{\mu, F}\left[f(X_{pt})g(X_t)\right],
		\\	
		&\varPhi_t(x)^{-1}\,
		\widehat p^{\mu, F}_{p t}\left(f\cdot 
		\widehat p^{\mu, F}_{(q-p)t}\left(g\cdot \varPhi_{(1-q) t}\right)\right)(x)
		= {\mathbb{E}}_{x|t}^{\mu, F}\left[f(X_{pt})g(X_{qt})\right].
	\end{split}
\end{equation*}
The left-hand terms of (3) and (4) of Lemma \ref{QED1} are not so directly interpretable, but they concern the average contribution of jumps occurring at times $pt$ and $qt$, respectively. The freedom in the function $g$ is exploited in the proof.}
\end{rem}

\begin{rem}\label{remQSDPenal}{\rm 
\begin{enumerate}
	\item[{\rm (1)}] By applying $f\equiv 1$ in Lemma \ref{QED1}(1), we see that for any $g \in L^1(E;\phi_0\mathfrak{m})$ and $x\in E$
	\begin{equation}\label{ExpQE}
		\lim_{t\to \infty}{\mathbb{E}}_{x|t}^{\mu, F}\left[g(X_t)\right] =\int_E g\, \d\nu_{\phi_0}. 
	\end{equation}
	This means that $\nu_{\phi_0}$ is a Yaglom limit, in particular, the unique quasi-stationary distribution of $X$ under ${\mathbb{P}}_{x|t}^{\mu, F}$ (cf. \cite{MV:2012}). Takeda and Tawara \cite{TT:2013} also obtained a result similar to the above by showing that the Feynman--Kac semigroup $\{p_t^{\mu, F}\}_{t\ge 0}$ is quasi-ergodic. It can be shown in a similar way as in \cite[Corollary 2]{KnoP}  that the limit \eqref{ExpQE} indeed converges exponentially: there exist $t_0>0$ and the constants $C, \gamma >0$ such that 
	\begin{equation*}
		\left|{\mathbb{E}}_{x|t}^{\mu, F}[g(X_t)]-\int_E g\, \rm d\nu_{\phi_0}\right| \le Ce^{-\gamma t} 
	\end{equation*} 
	holds for all $t\ge t_0$ and $x \in E$. Kaleta and Schilling \cite{Kaleta:2023} recently obtained
	the exponential quasi-ergodicity of the strong Feller semigroups with the finiteness of their heat content.
	
	\item[{\rm (2)}] For any $f \in L^1(E;\phi_0^2\mathfrak{m})$ and $x\in E$, the following limiting behavior can be shown with a similar approach as in Lemma \ref{QED1}(1):
	\begin{align*}
		\lim_{s\to \infty}\lim_{t\to \infty}{\mathbb{E}}_{x|t}^{\mu, F}[f(X_s)]
		&=\lim_{s\to \infty}\lim_{t\to\infty}\varPhi_t(x)^{-1}\,				\widehat p^{\mu, F}_{s}\left(f\cdot \widehat p_{t-s}^{\mu, F} \mathbf{1}\right)(x) \\
		&= \lim_{s\to \infty}\phi_0(x)^{-1} \widehat p^{\mu, F}_{s}(f \phi_0)(x)=  \lim_{s\to \infty}p^{\phi_0}_{s}f(x)=\int_E f\phi_0^2\d\mathfrak{m}.
	\end{align*}
	From this, we see that a Yaglom limit is different from the limit of the Feynman--Kac penalization.
\end{enumerate}
}
\end{rem}

\begin{proof}[Proof of Lemma \ref{QED1}] 
(1):~First, we assume that $f$ and $g$ are nonnegative. For each fixed $r_0 >0$, let 
\begin{equation}\label{MIN_PHI}
h_{r_0}^{(g)}(x):=\frac{1}{\phi_0(x)}\inf_{r\ge r_0}\widehat p_r^{\mu, F} g(x)
=\inf_{r\ge r_0}p_r^{\phi_0}\left(\frac{g}{\phi_0}\right)(x).
\end{equation}
We then exploit that $\widehat p_{(1-p)t}^{\mu, F} g(x) \ge h_{r_0}^{(g)}(x)\phi_0(x)$ for each $r_0>0$ such that $(1-p)t \ge r_0$.
Recalling that  $\{p_t^{\phi_0}\}_{t\ge 0}$ is ultracontractive, $\|p_t^{\phi_0}f\|_{L^\infty(E;\phi_0^2\mathfrak{m})} <\infty$ for any $f \in L^1(E;\phi_0^2\mathfrak{m})$ and for all $t \ge 0$. Then,
\begin{equation}
\label{eq_L1_control}
\begin{split}
	\int_E fh_{r_0}^{(g)} \phi_0^2\d\mathfrak{m} 
	&\le \int_E f \cdot p_{r_0}^{\phi_0}\left(\frac{g}{\phi_0}\right) 
	\cdot\phi_0^2\d\mathfrak{m} \\
	&=\int_E p_{r_0}^{\phi_0}f \cdot g \phi_0\d\mathfrak{m} \le \left\|p_{r_0}^{\phi_0}f\right\|_{L^\infty(E;\phi_0^2\mathfrak{m})}\int_E g\phi_0\d\mathfrak{m} <\infty,
	\end{split}\end{equation}
	that is, $f\,h_{r_0}^{(g)} \in L^1(E;\phi_0^2\mathfrak{m})$. 
	Thanks to \eqref{FKFET} and \eqref{vFt_cvg},
	this fact implies the following:
	\begin{equation}\label{cvinf_rpfg}
\liminf_{t\to \infty}\varPhi_t(x)^{-1}\,
\widehat p^{\mu, F}_{pt}\left(f\cdot \widehat p_{(1-p)t}^{\mu, F} g\right)(x)
\ge \liminf_{t\rightarrow \infty}\varPhi_t(x)^{-1}\,\widehat p^{\mu, F}_{pt}\left(f h_{r_0}^{(g)} \phi_0\right)(x)
=\frac{\int_E fh_{r_0}^{(g)}\phi_0^2\d\mathfrak{m}}{\int_E \phi_0\d\mathfrak{m}}.
\end{equation}
On the other hand, since $g/\phi_0 \in L^1(E; \phi_0^2 \mathfrak{m})$,
$h_{r_0}^{(g)}(y) \stackrel{r_0 \to \infty}{\longrightarrow} 
\int_E g\phi_0\d\mathfrak{m}$ for any $y\in E$. 
Hence, letting $r_0 \to \infty$, by the monotone convergence theorem,
\begin{equation}\label{Infest}
\begin{split}
	\liminf_{t\to \infty}\varPhi_t(x)^{-1}\,
	\widehat p^{\mu, F}_{pt}\left(f\cdot \widehat p_{(1-p)t}^{\mu, F} g\right)(x)
	&\ge\frac{\int_E f\phi_0^2\d\mathfrak{m} 
		\cdot\int_E g\phi_0\d\mathfrak{m}}{\int_E \phi_0 \d\mathfrak{m}} =\int_E f\phi_0^2\d\mathfrak{m} \cdot \int_E g\d\nu_{\phi_0}. 
\end{split}
\end{equation}

We note that the limit is already known thanks to \eqref{FKFET} provided $g = \phi_0$, because $f\cdot \widehat p^{\mu, F}_{(1-p)t}\phi_0	= f \phi_0 \in L^1(E; \phi_0 \mathfrak{m})$ does not depend on $t$. From this observation, the converse inequality in the limit is deduced as follows.
For $n \ge 1$, let $g_n = \min \{g, n\phi_0\}$. Applying $n\phi_0-g_n$ instead of $g$ in the left-hand side of \eqref{Infest}, we obtain:
\begin{align*}
\liminf_{t\to \infty}\varPhi_t(x)^{-1}\,\widehat p^{\mu, F}_{pt}\left(f\cdot \widehat p_{(1-p)t}^{\mu, F} (n\phi_0-g_n)\right)(x) &=\liminf_{t\to \infty}\varPhi_t(x)^{-1}\left\{n\widehat p^{\mu, F}_{pt}(f\phi_0)-\widehat p^{\mu, F}_{pt}\left(f\cdot \widehat p_{(1-p)t}^{\mu, F} g_n\right)(x)\right\} \\
&=\frac{n\int_E f\phi_0^2\d \mathfrak{m}}{\int_E \phi_0\d \mathfrak{m}}-\limsup_{t\to \infty}\varPhi_t(x)^{-1}\widehat p^{\mu, F}_{pt}\left(f\cdot \widehat p_{(1-p)t}^{\mu, F} g_n\right)(x),
\end{align*}
while the right-hand side of \eqref{Infest} becomes
\begin{equation}\label{rhsc} 
\int_E f\phi_0^2\d\mathfrak{m} \cdot \int_E (n \phi_0 - g_n)
\d\nu_{\phi_0}= \dfrac{n\int_E f\phi_0^2\d\mathfrak{m}}{\int_E \phi_0 \mathfrak{m}} 
- \int_E f\phi_0^2\d\mathfrak{m} \cdot\int_E g_n\d\nu_{\phi_0}.
\end{equation}
Thus, \eqref{Infest} implies the following:
\begin{equation}\label{SupEst}
\limsup_{t\to \infty}\varPhi_t(x)^{-1}\widehat p^{\mu, F}_{pt}\left(f\cdot \widehat p_{(1-p)t}^{\mu, F} g_n\right)(x) 
\le \int_E f\phi_0^2\d\mathfrak{m} \cdot \int_E g_n\d\nu_{\phi_0}.
\end{equation}
By the monotone convergence theorem, 
the $L^1(E;\phi_0\mathfrak{m})$-norm of $g-g_n$ goes to 0 as $n\to \infty$.
By adapting the argument in \eqref{eq_L1_control}
to $g-g_n$ instead of $g$, we deduce for any $p\in (0,1)$
that the $L^1(E;\phi_0^2 \mathfrak m)$-norm of 
$\xi^p_{n, t}
=\phi_0^{-1}\, f\cdot \widehat p_{(1-p)t}^{\mu, F} (g - g_n)$
converges to zero as $n\to \infty$
uniformly  in $t\ge r_0/(1-p)$:
\begin{equation*}
	\int_E \xi^p_{n, t} \phi_0^2\d\mathfrak{m} =\int_E  \phi_0^{-1}\widehat p_{(1-p)t}^{\mu, F} \left(g - g_n\right)\cdot f\, \phi_0^2\d\mathfrak{m} 
	\le c_{r_0} \left\|f\right\|_{L^1(E;\phi_0^2\,\mathfrak{m})}
	\int_E \left(g-g_n\right)\phi_0\d\mathfrak{m} ~\longrightarrow~ 0 \quad \text{as}~~n\to \infty.
\end{equation*}
Hence,
one has by (UC) of $\{p_t^{\phi_0}\}_{t\ge 0}$,
	for any $t\ge r_0/(1-p)$,
\begin{equation}
	\label{eq_L1_approx}
	\begin{split}
		\widehat p^{\mu, F}_{pt}\left(f\cdot \widehat p_{(1-p)t}^{\mu, F} \left(g -g_n\right)\right)(x) 	
		&= \phi_0(x) \cdot p^{\phi_0}_{pt}\left(\xi^p_{n, t} \right)(x) \le c_{r_0}^2\,\phi_0(x) 
			\left\|f\right\|_{L^1(E;\phi_0^2\,\mathfrak{m})}
			\int_E \left(g-g_n\right)\phi_0\d\mathfrak{m},
	\end{split}
\end{equation}
in which the right-hand side converges to 0 as $n\to \infty$ independently of $t$.
By linearity of the left-hand side regarding $g$, we deduce that \eqref{SupEst} extends for $g$ as $n\to \infty$. 
Now the assertion is obtained for nonnegative pairs $f$ and $g$. It directly extends to signed functions by linearity. 
\vskip 0.1cm
(2): The proof is similar to that of (1). First, assume that $f$ and $g$ are nonnegative. 
For each $r_0 >0$ such that both $(1-q)t \ge r_0$ and $(q-p)t \ge r_0$, 
the inequality below is deduced by virtue of \eqref{MIN_PHI}, i.e. the definition of $h_{r_0}^{(\rho)}$:
\begin{equation}\label{FKETLpq}
\varPhi_t(x)^{-1}\,
\widehat p^{\mu, F}_{pt}\left(f\cdot \widehat p_{(q-p)t}^{\mu, F} \left(g\cdot \varPhi_{(1-q)t}\right)\right)(x)
\ge \varPhi_t(x)^{-1}\,
\widehat p^{\mu, F}_{pt}\left(f \phi_0 h_{r_0}^{(\rho)}\right)(x),
\end{equation}
where $\rho(x) = g(x)\phi_0(x) h_{r_0}^{(\mathbf{1})}(x)$. By the definition of $h_{r_0}^{(\rho)}$ and the fact that $\{p_t^{\phi_0}\}_{t\ge 0}$ satisfies (UC), 
one can check that 
$f h_{r_0}^{(\rho)}\in L^1(E;\phi_0^2\mathfrak{m})$ with a norm upper-bounded by 
$c_{r_0}^2 \left\|f\right\|_{L^1(E;\phi_0^2\,\mathfrak{m})}\cdot \left\|g\right\|_{L^1(E;\phi_0^2\,\mathfrak{m})}
$.
Using this fact with \eqref{FKFET}, \eqref{vFt_cvg} and \eqref{FKETLpq}, we deduce
\begin{equation}\label{LB_fg}
\liminf_{t\to \infty}\varPhi_t(x)^{-1}\,
\widehat p^{\mu, F}_{pt}\left(f\cdot \widehat p_{(q-p)t}^{\mu, F} \left(g\cdot \varPhi_{(1-q)t}\right)\right)(x)
\ge \frac{\int_E fh_{r_0}^{(\rho)}\phi_0^2\d\mathfrak{m}}
{\int_E \phi_0\d\mathfrak{m}}.
\end{equation}
Similarly, we see $g h_{r_0}^{(\mathbf{1})}\in L^1(E;\phi^2_0\mathfrak{m})$,
so that thanks to \eqref{FKFET} and to the monotone convergence theorem:
\begin{equation}\label{hrho}
\begin{split}
	\liminf_{r_0\to \infty} h_{r_0}^{(\rho)}(y)&\ge \liminf_{r_0\to \infty}\left(\lim_{r\to \infty}p_r^{\phi_0}\left(g h_{r_0}^{(\mathbf{1})}\right)(y)\right) \\
	&\ge \liminf_{r_0\to \infty} \int_E g h_{r_0}^{(\mathbf{1})}\phi^2_0\d\mathfrak{m} \ge \int_E \phi_0 \d\mathfrak{m}\cdot \int_E g\phi_0^2 \d\mathfrak{m}.
\end{split}
\end{equation}
Injecting this lower-bound in \eqref{LB_fg} and using the monotone convergence theorem, we obtain
\begin{equation*}
\liminf_{t\to \infty}\varPhi_t(x)^{-1}\,
\widehat p^{\mu, F}_{pt}\left(f\cdot \widehat p_{(q-p)t}^{\mu, F} \left(g\cdot \varPhi_{(1-q)t}\right)\right)(x) 
\ge \int_E f\phi_0^2\d\mathfrak{m} \cdot \int_E g\phi_0^2\d\mathfrak{m}.
\end{equation*}

For the converse result, note that the convergence is already known for $g \equiv 1$ with a semigroup property, as a direct application of $(1)$.
The upper-bound on the supremum limit is thus deduced 
by considering the infimum limit of $\varPhi_t(x)^{-1}\,
\widehat p^{\mu, F}_{pt}(f\cdot \widehat p_{(q-p)t}^{\mu, F} ((n-g_n)\cdot \varPhi_{(1-q)t}))(x)$ 
with a very similar reasoning as for $(1)$, where $g_n = \min \{g, n\}$. 
Therefore,
\begin{equation}\label{eq_ext_L1}
\limsup_{t\to \infty}\varPhi_t(x)^{-1}\,
\widehat p^{\mu, F}_{pt}\left(f\cdot \widehat p_{(q-p)t}^{\mu, F} \left( g_n\cdot \varPhi_{(1-q)t}\right)\right)(x) 
\le \int_E f\phi_0^2\d\mathfrak{m} \cdot \int_E g\phi_0^2\d\mathfrak{m}.
\end{equation}
As for \eqref{eq_L1_approx},
\begin{equation}\label{eq_L1_approx2}
	\begin{split}
		\widehat p^{\mu, F}_{pt}\left(f\cdot \widehat p_{(q-p)t}^{\mu, F} \left((g-g_n)\cdot \varPhi_{(1-q)t}\right)\right)(x) 
		&\le c_{r_0}\phi_0(x) \left\|p_{r_0}^{\phi_0}f\right\|_{L^\infty(E;\phi_0^2\mathfrak{m})} \int_E  (g-g_n)\cdot p_{(1-q)t}^{\phi_0} \left(\frac{1}{\phi_0}\right) \phi_0^2\d\mathfrak{m} \\
		&\le c_{r_0}\phi_0(x) \left\|p_{r_0}^{\phi_0}f\right\|_{L^\infty(E;\phi_0^2\mathfrak{m})}
		\left\|p_{r_0}^{\phi_0}\left(g - g_n\right)\right\|_{L^\infty(E;\phi_0^2\mathfrak{m})}
		\int_E  \phi_0\d\mathfrak{m} \\
		&\le c_{r_0}^3\phi_0(x) \left\|f\right\|_{L^1(E;\phi_0^2\mathfrak{m})}
		\left\|g-g_n\right\|_{L^1(E;\phi_0^2\mathfrak{m})}\int_E  \phi_0\d\mathfrak{m}. 
	\end{split}
\end{equation}	
By the monotone convergence theorem, the $L^1(E;\phi_0^2\mathfrak{m})$-norm of $g-g_n$ tends to 0 as $n\to \infty$.
With \eqref{eq_L1_approx2}, we can pass to the limit $n\to \infty$ 
in \eqref{eq_ext_L1} and deduce the desired result for any nonnegative $f$ and $g$. It extends for arbitrary $f$ and $g$ by linearity. 
\vskip 0.1cm 

(3): The proof of the assertion follows the same lines as the proof of (1), with the additional symmetry property 
stated in \eqref{eq_sym_Jphi}.
First, we suppose  that $G$ and $g$ are nonnegative.
The $L^1$-norm according to the measure $\phi_0\mathfrak{m}$
of the function $N[G e^{-F} \phi_0 h_{r_0}^{(g)}]$
is upper-bounded thanks to the symmetry property:
\begin{equation}\label{L1NFg}
\begin{split}
	\int_E N\left[G e^{-F} \phi_0 h_{r_0}^{(g)}\right] \phi_0 \d \mathfrak{m} 
	&=\int_E N^{\phi_0}\left[G\cdot h_{r_0}^{(g)}\right] \phi_0^2 \d \mathfrak{m} 
	= \int_E  h_{r_0}^{(g)}\, N^{\phi_0}\left[G^T\right]\, \phi_0^2 \d \mathfrak{m} 
	\\
	&\le \int_E p^{\phi_0}_{r_0}\left(\frac{g}{\phi_0}\right) \,N^{\phi_0}\left[G^T\right] \phi_0^2 \d \mathfrak{m} 
	\le  c_{r_0}\left\|N^{\phi_0}\left[G^T\right]\right\|_{L^1(E; \phi_0^2\,\mathfrak{m})}
	\cdot \int_E g \phi_0\,\d\mathfrak{m} <\infty.
\end{split}
\end{equation}
In the right-hand side above, we used that $\{p_t^{\phi_0}\}_{t\ge 0}$ satisfies (UC), together with $N^{\phi_0}[G^T] \in L^1(E;\phi_0^2\mathfrak{m})$ due to \eqref{eq_sym_Jphi}. This implies the following limit 
thanks to \eqref{FKFET} and to \eqref{vFt_cvg},
since $(1-p) t \ge r_0$ hold for $t$ sufficiently large:
\begin{align*}
\liminf_{t\to \infty}\varPhi_t(x)^{-1}\,
\widehat p^{\mu, F}_{pt}\left(N\left[G e^{-F}\cdot \widehat p^{\mu, F}_{(1-p)t}g\right]\right)(x)
&\ge \liminf_{t\rightarrow \infty}\varPhi_t(x)^{-1}\,
\widehat p^{\mu, F}_{pt}\left(N\left[G e^{-F} \phi_0 h_{r_0}^{(g)}\right]\right)(x) 
\\&= \dfrac{\int_E N[G e^{-F} \phi_0 h_{r_0}^{(g)}] \phi_0 \d \mathfrak{m}}
{\int_E \phi_0\d\mathfrak{m}}.
\end{align*}
Letting $r_0 \to \infty$ (hence $h_{r_0}^{(g)}(\cdot) \to \int_E g\phi_0\d\mathfrak{m}$), again by the monotone convergence theorem, 
we conclude the lower-bound:
\begin{align*}
\liminf_{t\to \infty}\varPhi_t(x)^{-1}\,\widehat p^{\mu, F}_{pt}\left(N\left[G e^{-F}\cdot \widehat p^{\mu, F}_{(1-p)t}g\right]\right)(x)
&\ge \int_E N^{\phi_0}[G] \phi^2_0 \d\mathfrak{m} \cdot \int_E g \d\nu_{\phi_0} \\
&=\iint_{E\times E}G(y,z){\mathcal J}_{\phi_0}({\d}y{\d}z)\cdot \int_E g \d\nu_{\phi_0}.
\end{align*}

As for $(1)$, because the function $N[G e^{-F}\cdot\widehat p^{\mu, F}_{(1-p)t}\phi_0]
= N^{\phi_0}[G]\phi_0 \in L^1(E; \phi_0 \mathfrak{m})$
does not depend on $t$, we  notice that the limit is already known 
provided $g = \phi_0$.
So we define $g_n = \min \{g, n\phi_0\}$ for any $n\ge 1$ and identify by using linearity the relevant lower- and upper-bound of 
\begin{equation*}
\liminf_{t\rightarrow \infty}\varPhi_t(x)^{-1}
\widehat p^{\mu, F}_{pt}\left(N\left[G e^{-F}\cdot \widehat p^{\mu, F}_{(1-p)t}(n\phi_0 - g_n)\right]\right)(x)
\end{equation*}
to deduce the following:
\begin{equation*}
\limsup_{t\rightarrow \infty}\varPhi_t(x)^{-1}\,\widehat p^{\mu, F}_{pt}\left(N\left[G e^{-F}\cdot \widehat p^{\mu, F}_{(1-p)t}g_n\right]\right)(x) \le 
\iint_{E\times E}G(y,z){\mathcal J}_{\phi_0}({\d}y{\d}z)\cdot \int_E g_n \d\nu_{\phi_0}.
\end{equation*}
This concludes the upper-bound in $(3)$ for $g_n$ and any nonnegative $G$.
 As for $(1)$, with an adaptation of \eqref{L1NFg} instead of \eqref{eq_L1_control},
we can pass to the limit $n\rightarrow \infty$
and derive the upper-bound in $(3)$ for any nonnegative $g$ and $G$.
It extends to signed functions $g$ and $G$ by linearity.
\vskip 0.1cm

(4): The procedure so far can be iterated in the sense of the assertion $(4)$, as for the assertion $(2)$ without specific difficulty. Suppose first that $g$, $G$ and $K$ are nonnegative. For each $r_0 >0$ such that both $(1-q)t \ge r_0$ and $(q-p)t \ge r_0$, 
the inequality below is deduced by virtue of \eqref{MIN_PHI}:
\begin{equation*}
\varPhi_t(x)^{-1}\,\widehat p^{\mu, F}_{pt}\Big(N\Big[G e^{-F} \cdot \widehat p^{\mu, F}_{(q-p)t}\left(N\left[K e^{-F} \cdot \widehat p^{\mu, F}_{(1-q)t}g\right]\right)\Big]\Big)(x) \ge  \varPhi_t(x)^{-1}\,\widehat p^{\mu, F}_{pt}\left(N\left[G e^{-F} \cdot\phi_0 h_{r_0}^{(\rho)}\right]\right)(x),
\end{equation*}
where $\rho(x) = N[K e^{-F} \phi_0 h_{r_0}^{(g)}](x)$.
By the definition of $h_{r_0}^{(\rho)}$ and the fact that $\{p_t^{\phi_0}\}_{t\ge 0}$ satisfies (UC), 
one can also check as in \eqref{L1NFg} that 
$N[G e^{-F}\phi_0 h_{r_0}^{(\rho)}]\in L^1(E;\phi_0\mathfrak{m})$,
with a norm upper-bounded by
\begin{equation*}
c_{r_0}^2 \left\|N^{\phi_0}[G^T]\right\|_{L^1(E;\phi_0^2\mathfrak{m})}\cdot \left\|N^{\phi_0}[K^T]\right\|_{L^1(E;\phi_0^2\mathfrak{m})}
\cdot\int_E g \phi_0 {\rm d}\mathfrak{m}< \infty.
\end{equation*} 
Thus, by \eqref{FKFET}, \eqref{vFt_cvg} and \eqref{eq_sym_Jphi},
\begin{equation*}
\begin{split}
	&	\liminf_{t\rightarrow \infty}
	\varPhi_t(x)^{-1}\, 
	\widehat p^{\mu, F}_{pt}\Big(N\Big[G e^{-F}
	\cdot	\widehat p^{\mu, F}_{(q-p)t}\left(N\left[K e^{-F} 
	\cdot\widehat p^{\mu, F}_{(1-q)t}g\right]\right)\Big]\Big)(x) \\
	&\qquad \quad \ge \dfrac{\int_E N[G e^{-F} \phi_0 h_{r_0}^{(\rho)}] \phi_0 \d \mathfrak{m}}
	{\int_E \phi_0\d\mathfrak{m}} = \dfrac{\int_E N^{\phi_0}[G\cdot h_{r_0}^{(\rho)}] \phi_0^2 \d\mathfrak{m}}
	{\int_E \phi_0\d\mathfrak{m}}=\dfrac{\int_E N^{\phi_0}[G]h_{r_0}^{(\rho)}\phi_0^2\d \mathfrak{m}}{\int_E \phi_0\d \mathfrak{m}}.
\end{split}
\end{equation*}
We may then exploit \eqref{hrho} to deduce the following: for any $y \in E$,
\begin{equation*}
\begin{split}
	\liminf_{r_0\rightarrow \infty}h_{r_0}^{(\rho)}(y) 
	\ge \liminf_{r_0\rightarrow \infty} \int_E N\left[K e^{-F}\phi_0 h_{r_0}^{(g)}\right] \phi_0 \d\mathfrak{m}
	= \int_E N^{\phi_0}[K] \phi^2_0 \d\mathfrak{m}\cdot \int_E g \phi_0 \d\mathfrak{m}.
\end{split}
\end{equation*}
With these two estimates, we conclude the lower-bound:
\begin{equation*}
\begin{split}
	&\liminf_{t\rightarrow \infty}\varPhi_t(x)^{-1}\,\widehat p^{\mu, F}_{pt}\Big(N\Big[Ge^{-F}\cdot\widehat p^{\mu, F}_{(q-p)t}\left(N\left[K e^{-F}\cdot\widehat p^{\mu, F}_{(1-q)t}g\right]\right)\Big]\Big)(x)\\
	&\qquad \quad \ge \int_E N^{\phi_0}[G] \phi_0^2 \d\mathfrak{m}\cdot \int_E N^{\phi_0}[K] \phi^2_0 \d\mathfrak{m}\cdot \int_E g \d\nu_{\phi_0} \\
	&\qquad \quad = \iint_{E\times E}G(y,z){\mathcal J}_{\phi_0}({\d}y{\d}z) \cdot \iint_{E\times E}K(y,z){\mathcal J}_{\phi_0}({\d}y{\d}z) \cdot \int_E g {\rm d}\nu_{\phi_0}.
\end{split}
\end{equation*}

For the converse inequality, we again focus on the case where $g = \phi_0$, in which case the left-hand side of the above inequality takes the following form:
\begin{equation*}
	\liminf_{t\to \infty}\varPhi_t(x)^{-1}\widehat{p}_{pt}^{\mu,F}\Big(N\Big[Ge^{-F}\cdot \widehat{p}_{(q-p)t}^{\mu,F}\left(N\left[Ke^{-F}\phi_0\right]\right)\Big]\Big)(x). 
\end{equation*} 
On the other hand, the assertion $(3)$ implies the following convergence, with $g = N[Ke^{-F}\phi_0]=N^{\phi_0}[K]\phi_0$,
$t' = qt$ and $p' = p/q \in (0, 1)$:
\begin{equation*}
	\begin{split}
		&\lim_{t\rightarrow \infty}\varPhi_{qt}(x)^{-1}\widehat{p}_{pt}^{\mu,F}\Big(N\Big[Ge^{-F}\cdot \widehat{p}_{(q-p)t}^{\mu,F}\left(N\left[K e^{-F}\phi_0\right]\right)\Big]\Big)(x)\\
		&\qquad = \lim_{t'\rightarrow \infty}\varPhi_{t'}(x)^{-1}\,\widehat p^{\mu, F}_{p't'}\Big(N\Big[Ge^{-F}\cdot	\widehat p^{\mu, F}_{(1-p')t'}\left(N\left[K e^{-F}\phi_0\right]\right)\Big]\Big)(x) \\
		&\qquad = \iint_{E\times E}G(y,z){\mathcal J}_{\phi_0}({\d}y{\d}z)\cdot \int_E N^{\phi_0}[K]\phi_0 \d\nu_{\phi_0} \\
		&\qquad = \iint_{E\times E}G(y,z){\mathcal J}_{\phi_0}({\d}y{\d}z) \cdot \iint_{E\times E} K(y,z){\mathcal J}_{\phi_0}({\d}y{\d}z) \cdot \int_E \phi_0 {\rm d}\nu_{\phi_0}.
	\end{split}
\end{equation*}
Thanks to \eqref{vFt_cvg}, $\varPhi_{qt}/\varPhi_{t}$ tends to 1 as $t\to \infty$, which concludes the proof of $(4)$ in the case where $g=\phi_0$. From this fact, one can deduce the upper-bound on the supremum limit 
 by the same approximation scheme as in
the proof of~$(3)$.
The reasoning in \eqref{eq_L1_approx2}
can similarly be adapted with 
\eqref{L1NFg} instead of \eqref{eq_L1_control}
to deal with the error term  associated with replacing $g$ by the nondecreasing sequence $g_n$.
The result finally extends to signed functions $G$ and $K$ by linearity.

\end{proof}

\subsection{Quasi-ergodic limits for additive functionals}

We now give a proof of the following quasi-ergodic limit theorem for pairs of continuous and discontinuous additive functionals under the Feynman--Kac transforms.

\begin{thm}\label{JumpErgodic}
For any $V \in L^1(E;\phi_0^2\mathfrak{m})$ and any $G\in \mathcal K(E\times E)$ such that $N^{\phi_0}[|G|] \in L^1(E;\phi^2_0\mathfrak{m})$, suppose that for some $r>0$, $\varepsilon>0$ and for any $x\in E$
\begin{equation}\label{eq_bd_AfG}
		\sup_{t\ge r} {\bE}_{x|t}^{\mu,F}\left[\left|A_\varepsilon^{V,G}\right|\right] <\infty,
		\quad \sup_{t\ge r} {\bE}_{x|t}^{\mu,F}\left[\left|A_t^{V,G}-A_{t-\varepsilon}^{V,G}\right|\right] <\infty.
\end{equation}
Then, we have for any $x \in E$
\begin{equation*}
\lim_{t\to \infty}{\mathbb{E}}_{x|t}^{\mu, F}\left[\frac{1}{t}A_t^{V,G}\right] = \int_E V\phi_0^2 \d\mathfrak{m} + \iint_{E\times E}G(x,y){\mathcal J}_{\phi_0}({\d}x{\d}y).
\end{equation*}
\end{thm}

\begin{proof}
The proof is a consequence of the results in the previous subsections. First, we suppose that $V$ and $G$ are nonnegative. 
The equality below follows from \eqref{V_dist} and \eqref{tg}:
\begin{align*}
	\lim_{t\to \infty}{\mathbb{E}}_{x|t}^{\mu, F}\left[\frac{1}{t}A_t^{V,G}\right] 
	&=\lim_{t\to \infty}\frac{1}{t}\int_0^t \varPhi_t(x)^{-1} \widehat p^{\mu, F}_{s}\Big(V\cdot \varPhi_{t-s} + N\left[Ge^{-F}\cdot \varPhi_{t-s}\right]\Big)(x)\d s \\ 
	&=\lim_{t\to \infty}\int_0^1 \varPhi_t(x)^{-1}\widehat p^{\mu, F}_{pt}\Big(V\cdot \varPhi_{(1-p)t} +N\left[Ge^{-F}\cdot \varPhi_{(1-p)t}\right]\Big)(x)\d p
\end{align*}
Note that the semigroup in the integral on the right-hand side above does not guarantee its boundedness when $p$ is sufficiently close to 0 or 1, under only the current integrability conditions of $V$ and $N^{\phi_0}[G]$ with respect to $\phi_0^2\mathfrak{m}$. Let us take $t>0$ large enough such that $t\ge \max \{\varepsilon /p, \varepsilon /(1-p)\}$ and $\varPhi_t(x) \ge (\phi_0(x)/2)\int_E\phi_{0} {\d}\mathfrak{m}$ by recalling \eqref{FKFET}. Then, we see by (UC) of $(p_t^{\phi_0})_{t\ge 0}$ together with \eqref{eq_phi_ratio} and \eqref{eq_sym_Jphi} that $\varPhi_t^{-1}\widehat p^{\mu, F}_{pt}(V\cdot \varPhi_{(1-p)t} +N[Ge^{-F}\cdot \varPhi_{(1-p)t}])$ is uniformly upper-bounded for such a large enough $t>0$: 
\begin{align}\label{eq_L1_bound}
	\begin{split}
		&\left\|\varPhi_t^{-1}\widehat p^{\mu, F}_{pt}\Big(V\cdot \varPhi_{(1-p)t} +N\left[Ge^{-F}\cdot \varPhi_{(1-p)t}\right]\Big)\right\|_{L^\infty (E;\phi_0^2\mathfrak{m})} \\
		&\quad \le \frac{2}{\int_E \phi_0 {\d}\mathfrak{m}}\left\|p_{pt}^{\phi_0}\Big(V\cdot p_{(1-p)t}^{\phi_0}\left({1/\phi_0}\right)+N^{\phi_0}\left[G\cdot p_{(1-p)t}^{\phi_0}\left({1/\phi_0}\right)\right]\Big)\right\|_{L^\infty (E;\phi_0^2\mathfrak{m})} \\
		&\quad \le \frac{2c_\varepsilon}{\int_E \phi_0 {\d}\mathfrak{m}}\left\|V\cdot p_{(1-p)t}^{\phi_0}\left({1/\phi_0}\right)+N^{\phi_0}\left[G\cdot p_{(1-p)t}^{\phi_0}\left({1/\phi_0}\right)\right]\right\|_{L^1 (E;\phi_0^2\mathfrak{m})} \\
		&\quad \le \frac{2c_\varepsilon}{\int_E \phi_0 {\d}\mathfrak{m}}\left\|p_{(1-p)t}^{\phi_0}({1/\phi_0})\right\|_{L^\infty (E;\phi_0^2\mathfrak{m})} \left(\|V\|_{L^1(E;\phi_0^2\mathfrak{m})}+\left\|N^{\phi_0}\left[G^T\right]\right\|_{L^1 (E;\phi_0^2\mathfrak{m})}\right) \\
		&\quad \le 2c_\varepsilon^2 \left(\|V\|_{L^1(E;\phi_0^2\mathfrak{m})}+\left\|N^{\phi_0}\left[G^T\right]\right\|_{L^1 (E;\phi_0^2\mathfrak{m})}\right) <\infty.
	\end{split}
\end{align}
By combining \eqref{eq_L1_bound} with the following decomposition adapted from \eqref{V_dist} and \eqref{tg},
\begin{equation}
	{\mathbb{E}}_{x|t}^{\mu, F}\left[A_r^{V,G}\right] =\int_0^r \varPhi_t(x)^{-1}\widehat p^{\mu, F}_{s}\Big(V\cdot \varPhi_{t-s} + N\left[Ge^{-F}\cdot \varPhi_{t-s}\right]\Big)(x)\d s,
\end{equation}
which holds for any $r\in [0, t]$, the following result is justified by the dominated convergence theorem: 
\begin{align*}
	\lim_{t\to \infty}{\mathbb{E}}_{x|t}^{\mu, F}\left[\frac{1}{t}\left(A_{t-\varepsilon}^{V,G}-A_\varepsilon^{V,G}\right)\right] 
	&\quad =\lim_{t\to \infty}\int_{\varepsilon/t}^{1-\varepsilon/t} \varPhi_t(x)^{-1}\widehat p^{\mu, F}_{pt}\Big(V\cdot \varPhi_{(1-p)t} + N\left[Ge^{-F}\cdot \varPhi_{(1-p)t}\right]\Big)(x)\d p \\
	&\quad =\int_E V\phi_0^2 \d\mathfrak{m} + \iint_{E\times E}G(x,y){\mathcal J}_{\phi_0}({\d}x{\d}y).
\end{align*}
The result, justified for nonnegative functions $V, G$, extends by linearity to signed functions.

On the other hand, by \eqref{eq_bd_AfG}, 
\begin{align*}
	&\left|{\bE}_{x|t}^{\mu,F}\left[\frac{1}{t}A_t^{V,G}\right] -{\bE}_{x|t}^{\mu,F}\left[\frac{1}{t}\left(A_{t-\varepsilon}^{V,G}-A_{\varepsilon}^{V,G}\right)\right]\right| \\
	&\qquad \le \frac{1}{t}{\bE}_{x|t}^{\mu,F}\left[\left|A_t^{V,G}-A_{t-\varepsilon}^{V,G}\right|\right]
	+\frac{1}{t}{\bE}_{x|t}^{\mu,F}\left[\left|A_\varepsilon^{V,G}\right|\right] ~~\longrightarrow ~~0 \quad \text{as}~~t\to \infty\,.
\end{align*}
This concludes the proof of Theorem~\ref{JumpErgodic}.
\end{proof}

Theorem \ref{JumpErgodic} tells us that the quasi-ergodic limits for both the occupation time and the number of jumps 
caused by $X$ under ${\mathbb{P}}_{x|t}^{\mu, F}$ are characterized by the measures $\phi_0^2\mathfrak{m}$ and ${\mathcal J}_{\phi_0}$, respectively. In particular, $\phi_0^2\mathfrak{m}$ is a unique quasi-ergodic distribution of $X$ under ${\mathbb{P}}_{x|t}^{\mu, F}$.
\vskip 0.1cm
The following lemma shows that we can replace the condition \eqref{eq_bd_AfG} in Theorem~\ref{JumpErgodic} by \eqref{eq_bd_AfGphi0} below, after possibly replacing both $V$ and $G$ by their absolute values:

\begin{lem}\label{lem_AF}
For any nonnegative measurable function $V$ on $E$ and any nonnegative $G \in {\mathcal K}(E\times E)$, suppose that for some $\varepsilon >0$ and for any $x \in E$:
\begin{equation}\label{eq_bd_AfGphi0}
	\mathbb E_x^{\phi_0}\left[\left(A^{V, G}_\varepsilon\right)^2\right] <\infty,
	\quad \int_E {\mathbb E}_y^{\phi_0}\left[\left(A^{V, G^T}_{\varepsilon}\right)^2\right]\phi_0(y)\mathfrak{m}(\d y) <\infty.
\end{equation}
Then, the condition \eqref{eq_bd_AfG} holds.
\end{lem}
\begin{proof}

First, note that the finiteness conditions with respect to $(A_\varepsilon^{V,G})^2$ and $(A_\varepsilon^{V,G^T})^2$ in \eqref{eq_bd_AfGphi0} imply finiteness with respect to $ A_\varepsilon^{V,G}$ and $A_\varepsilon^{V,G^T}$ 
due to the Cauchy-Schwarz inequality. Recalling \eqref{vFt_cvg}, 
let $r\ge 2$ be sufficiently large so that:
\begin{equation}\label{eq_vFt_min}
	\inf_{t\ge r} \varPhi_t(x)\ge \frac{\phi_0(x)}2\, \int_E \phi_0{\d}\mathfrak{m} \,.
\end{equation}
By the Markov property at time $\varepsilon \in (0,1)$,
\begin{align*}
	\mathbb E_{x}\left[A^{V, G}_{\varepsilon}e^{\lambda_0 t}  W^{\mu, F}_t\right] &= \mathbb E_{x}\left[A^{V, G}_{\varepsilon}e^{\lambda_0 \varepsilon}  W^{\mu, F}_\varepsilon \varPhi_{t-\varepsilon}(X_\varepsilon)\right] \\
	&\le c'_{1} \mathbb E_{x}\left[A^{V, G}_{\varepsilon}e^{\lambda_0 \varepsilon}  W^{\mu, F}_\varepsilon \phi_0(X_\varepsilon)\right] \\
	&= c'_{1}  \phi_0(x) \cdot \mathbb E^{\phi_0}_{x}\left[A^{V, G}_{\varepsilon}\right],
\end{align*}
where we recall \eqref{eq_phi_ratio} 
with $c'_1 = c_1 \int_E \phi_0 {\rm d}\mathfrak m$. Together with  \eqref{eq_vFt_min}, we thus deduce for any $t\ge r$:
\begin{equation*}
	\mathbb E_{x|t}^{\mu, F}\left[A^{V, G}_{\varepsilon}\right] \le 2c_1 \mathbb E^{\phi_0}_{x}\left[A^{V, G}_{\varepsilon}\right] <\infty.
\end{equation*}
On the other hand, by the Markov property at time $t-\varepsilon$,
\begin{equation}\label{eq_fin_AF}	
	\mathbb E_{x}\left[(A^{V, G}_t - A^{V, G}_{t-\varepsilon}) e^{\lambda_0 t}  W^{\mu, F}_t\right]	
	= \phi_0(x) p_{t-\varepsilon}^{\phi_0}\left(\xi_\varepsilon^{V, G}\right)(x),
\end{equation}
where $\xi_\varepsilon^{V, G}(x):= \phi_0(x)^{-1} \mathbb E_{x}[A^{V, G}_{\varepsilon} e^{\lambda_0 \varepsilon}\, W^{\mu,F}_{\varepsilon}]$. A straightforward adaptation of the justification behind \eqref{V_dist} 
and \eqref{tg} leads to:
\begin{equation}\label{eq_A_Xi}
	\begin{split}
		\xi_\varepsilon^{V, G}(x)
		&= \phi_0(x)^{-1}  \int_0^\varepsilon \widehat{p}_{s}^{\mu,F}\left(V\cdot \varPhi_{\varepsilon-s}+N\left[Ge^{-F}\cdot \varPhi_{\varepsilon-s}\right]\right)(x)\d s 
		\\&=  \int_0^\varepsilon p_{s}^{\phi_0}\left(V\cdot p^{\phi_0}_{\varepsilon-s}(1/\phi_0)+N^{\phi_0}\left[G\cdot p^{\phi_0}_{\varepsilon-s}(1/\phi_0)\right]\right)(x)\d s\,.
	\end{split}
\end{equation}
Similarly,
\begin{equation}\label{eq_A_phi}
	{\mathbb E}_y^{\phi_0}\left[A_{\varepsilon}^{V,G^T}\right]
	= \int_0^\varepsilon p^{\phi_0}_{\varepsilon-s} \left(V + N^{\phi_0}\left[G^T\right]\right)(x)\d s.
\end{equation}
\eqref{eq_A_Xi} and \eqref{eq_A_phi} together with the symmetry of $(p_t^{\phi_0})_{t\ge 0}$ 
entail that $\xi_\varepsilon^{V, G}$ belongs to $L^1(E;\phi_0^2\mathfrak{m})$. Indeed,
\begin{equation*}
	\begin{split}
		\int_E \xi_\varepsilon^{V, G} \phi_0^2 {\rm d}\mathfrak m 
		&=\int_0^\varepsilon \int_E p_{ s}^{\phi_0}\left(V\cdot p^{\phi_0}_{\varepsilon- s}(1/\phi_0)+N^{\phi_0}\left[G\cdot p^{\phi_0}_{\varepsilon- s}(1/\phi_0)\right]\right)(y) \phi_0^2(y)\mathfrak{m}(\d y) \d s 
		\\&=\int_0^\varepsilon \int_E \left(V\cdot p^{\phi_0}_{\varepsilon- s}(1/\phi_0)+N^{\phi_0}\left[G^T\right]\cdot p^{\phi_0}_{\varepsilon- s}(1/\phi_0)\right)(y) \phi_0^2(y)\mathfrak{m}(\d y) \d s \\
		&=\int_0^\varepsilon \int_E p^{\phi_0}_{\varepsilon- s} \left(V + N^{\phi_0}\left[G^T\right]\right)(y)\, \phi_0(y)\mathfrak{m}(\d y)\, \d s
		\\	&=\int_E {\mathbb E}_y^{\phi_0}\left[A_{\varepsilon}^{V,G^T}\right]\phi_0(y)\mathfrak{m}(\d y) <\infty.
	\end{split}
\end{equation*}
Then, by recalling \eqref{eq_vFt_min} and \eqref{eq_fin_AF}, we deduce for any $t\ge r$:
\begin{equation*}
	\mathbb E_{x|t}^{\mu, F}\left[A^{V, G}_{t} - A^{V, G}_{t-\varepsilon}\right] \le \frac{2}{\int_E \phi_0 \d\mathfrak{m}} \int_E {\mathbb E}_y^{\phi_0}\left[A_{\varepsilon}^{V,G^T}\right]\phi_0(y)\mathfrak{m}(\d y) <\infty.
\end{equation*}
The proof is complete. 
\end{proof}
\vskip 0.1cm
We say that a signed smooth measure $\nu:=\nu^+-\nu^-$ on $E$ in the strict sense belongs to the Dynkin class associated to $X$ ($\nu \in {\mathcal S}_D^1(X)$ in notation), if it satisfies that
\begin{equation*}
\sup_{x \in E}\mathbb{E}_x\left[|A_t^\nu|\right] <\infty
\end{equation*} 
for some, hence for any $t>0$. 
It is clear that ${\mathcal S}_K^1(X) \subset {\mathcal S}_D^1(X)$. In what follows, for any measurable function $V$ from $E$ to $\mathbb R$, $V\in {\mathcal S}_D^1(X)$ is to be understood as $V\mathfrak m\in {\mathcal S}_D^1(X)$, and similarly for ${\mathcal S}_K^1(X)$ instead of ${\mathcal S}_D^1(X)$.

\begin{lem}\label{lem_AF_id}
Let $V$ and $G\in \mathcal K(E\times E)$ be such that $V, N[|G|], N[|G|^T] \in {\mathcal S}_D^1(X)$. Then, the condition \eqref{eq_bd_AfGphi0} holds with $|V|, |G|$ instead of $V, G$, thus also \eqref{eq_bd_AfG}.
\end{lem}
\begin{proof}
Given Lemma~\ref{lem_AF}, it suffices to prove the condition \eqref{eq_bd_AfGphi0}.
By the Cauchy-Schwarz inequality,
\begin{align}\label{finitenessAVG2}
	\begin{split}
		\mathbb E_x^{\phi_0}\left[\left|A^{V, G}_\varepsilon\right|^2\right]&\le e^{\lambda_0 \varepsilon}\frac{\|\phi_0\|_\infty}{\phi_0(x)}{\mathbb E}_x\left[\exp \left(\left|A_\varepsilon^{\mu,F}\right|\right)\left|A^{V, G}_\varepsilon\right|^2\right] \\
		&\le e^{\lambda_0 \varepsilon}\frac{\|\phi_0\|_\infty}{\phi_0(x)}{\mathbb E}_x\left[\exp \left(2\left|A_\varepsilon^{\mu,F}\right|\right)\right]^{1/2}{\mathbb E}_x\left[\left|A^{V, G}_\varepsilon\right|^4\right]^{1/2}.  
	\end{split}
\end{align}
Put $\mu_2:=2|\mu|$ and $F_2:=e^{2|F|}-1$. By the boundedness of $F$, it is easy to check that the measures $\mu_2$ and $N[F_2]$ are of Kato class associated to $X$.
Let ${\rm Exp} (A)_t:=e^{A_t^c}\prod_{0<s\le t}(1+\Delta A_s)$ be the Stieltjes exponential of a positive additive functional $A_t$, where $A_t^c$ denotes the continuous part of $A_t$ and $\Delta A_s=A_s-A_{s-}$ (see [32, (2.5)]). Then  
\begin{equation*}
	\exp \left(2\left|A_\varepsilon^{\mu,F}\right|\right) \le\, \exp \left(A_\varepsilon^{2|\mu|,2|F|}|\right)={\rm Exp} \left(A^{\mu_2,F_2}\right)_\varepsilon. 
\end{equation*}
From this, with Khas'minskii's lemma for a positive additive functional ([32, Lemma 2.1]), we see
\begin{equation}\label{Khas}
	{\mathbb E}_x\left[\exp \left(2\left|A_\varepsilon^{\mu,F}\right|\right)\right]\le\, {\mathbb E}_x\left[{\rm Exp} \left(A^{\mu_2,F_2}\right)_\varepsilon\right] \le \frac{1}{1-\sup_{x\in E}{\mathbb E}_x[A^{\mu_2,F_2}_\varepsilon]} <\infty\,.
\end{equation}
for sufficiently small $\varepsilon>0$. Further, by virtue of [17, Lemma 2.1], one has
\begin{equation}\label{interpol}
	{\mathbb E}_x\left[\left|A^{V, G}_\varepsilon\right|^4\right] \le 24\left(\sup_{x\in E}{\mathbb E}_x\left[\left|A^{V, G}_\varepsilon\right|\right]\right)^4 <\infty.
\end{equation} 
Now, the finiteness of the left-hand side of \eqref{finitenessAVG2} easily follows from \eqref{Khas} and \eqref{interpol}. This procedure also applies to showing the second condition in \eqref{eq_bd_AfGphi0}.
\end{proof}

We observe the following inequality due to \eqref{def_Nphi} and the boundedness of both $\phi_0$ and $F$:
\begin{equation}\label{eq_prop_L1phi}
\left\|N^{\phi_0}[|G|]\right\|_{L^1(E; \phi_0^2 \mathfrak m)}
= \int_E N\left[|G|\, e^{-F}\, \phi_0\right] \phi_0 {\d}\mathfrak m \le 	\|\phi_0\|_{L^\infty(E; \mathfrak m)}^2
\, \left\|e^{-F}\right\|_{L^\infty(E; \mathfrak m)}
\, \|N[|G|]\,\|_{L^1(E; \mathfrak m)}\,.
\end{equation}
So $N^{\phi_0}[|G|] \in L^1(E; \phi_0^2 m)$ is implied by $N[|G|] \in L^1(E; m)$, and similarly $L^1(E; m)\subset L^1(E; \phi_0^2 m)$.
\vskip 0.1cm
In view of Lemma \ref{lem_AF} and Lemma \ref{lem_AF_id}, we have the following corollary, which plays a role in Section 4.

\begin{cor}\label{JumpErgodicCor}
For any $V$ and $G\in \mathcal K(E\times E)$ such that
$V, N[|G|], N[|G|^T] \in L^1(E;\mathfrak{m})\cap {\mathcal S}_D^1(X)$,
we have for any $x \in E$:
\begin{equation*}
	\lim_{t\to \infty}{\mathbb{E}}_{x|t}^{\mu, F}\left[\frac{1}{t}A_t^{V,G}\right] = \int_E V\phi_0^2 \d\mathfrak{m} + \iint_{E\times E}G(x,y){\mathcal J}_{\phi_0}({\d}x{\d}y).
\end{equation*}
In particular, for any $x \in E$
\begin{equation*}
	\lim_{t\to \infty}{\mathbb{E}}_{x|t}^{V, F}\left[\frac{1}{t}A_t^{V,F}\right] = \int_E V\phi_0^2 \d\mathfrak{m} + \iint_{E\times E}F(x,y){\mathcal J}_{\phi_0}({\d}x{\d}y)
\end{equation*}
provided $V$, $N[|F|]  \in L^1(E;\mathfrak{m}) \cap {\mathcal S}_K^1(X)$.
\end{cor}

\subsection{Conditional functional weak law of large numbers}\label{sec:WLLN}  

In this subsection, we establish conditional functional weak laws of large numbers for $X$ under ${\mathbb{P}}_{x|t}^{\mu, F}$. The result will play an important role in the next section. 
We first present the two Lemmas~\ref{Moment2}-\ref{JumpWLLN} that extend Theorem~\ref{JumpErgodic}
for the case of second moments. The approach is similar yet more technical due to the many boundary terms; their proofs are deferred to the Appendix. Recall Lemma~\ref{lem_AF_id} regarding the short-time second moment estimates.

\begin{lem}\label{Moment2}
For any $V \in L^1(E; \mathfrak{m})\cap \mathcal S_D^1(X)$, we have for any  $x \in E$:
\begin{equation*}
	\lim_{t\to \infty}{\mathbb{E}}_{x|t}^{\mu, F}\left[\left(\frac{1}{t}A_t^V\right)^2\right] =\left(\int_E V\phi_0^2\d\mathfrak{m}\right)^2. 
\end{equation*}
\end{lem}
\vskip 0.2cm
We need an additional condition $N[G^2], N[(G^2)^T]\in L^1(E;\mathfrak{m})\cap \mathcal S^1_D(X)$ to prove Lemma \ref{JumpWLLN} below because we do not assume the boundedness of $G$.

\begin{lem}\label{JumpWLLN}
For any $G\in \mathcal K(E\times E)$ such that $N[|G|], N[|G|^T], N[G^2], N[(G^2)^T]\in L^1(E; \mathfrak{m}) \cap \mathcal S_D^1(X)$,
we have for any $x \in E$:
\begin{equation*}
	\lim_{t\to \infty}{\mathbb{E}}_{x|t}^{\mu, F}\left[\left(\frac{1}{t}A_t^G\right)^2\right] 
	=\left(\iint_{E\times E}G(y,z){\mathcal J}_{\phi_0}({\rm d}y{\rm d}z)\right)^2.
\end{equation*} 
\end{lem}
\vskip 0.2cm

Now, we deduce the following conditional functional weak law of large numbers for $X$ under ${\mathbb{P}}_{x|t}^{\mu, F}$. 

\begin{thm}\label{VGWLLN}
For any $V$ and $G\in \mathcal K(E\times E)$ such that $V, N[|G|],N[|G|^T], N[G^2], N[(G^2)^T] \in L^1(E;\mathfrak{m})\cap {\mathcal S}_D^1(X)$,
we have for any $x \in E$:
\begin{equation*}
	\lim_{t \to \infty}{\mathbb{P}}_{x|t}^{\mu, F}\left(\left|
	\frac{1}{t}A_t^{V,G} - \left(\int_E V\phi_0^2 \d\mathfrak{m} 
	+\iint_{E\times E}G(y,z){\mathcal J}_{\phi_0}({\rm d}y{\rm d}z)\right)
	\right| \ge \varepsilon\right)=0.
\end{equation*}
\end{thm}

\begin{proof}
By Lemma \ref{Moment2}, Lemma \ref{JumpWLLN} and Theorem~\ref{JumpErgodic}, we see that
\begin{equation*}
	\begin{split}
		&{\mathbb{E}}_{x|t}^{\mu, F}\left[\left|\frac{1}{t}A_t^{V,G} - \left(\int_E V\phi_0^2\d\mathfrak{m} +\iint_{E\times E}G(y,z){\mathcal J}_{\phi_0}({\d}y{\d}z)\right)\right|^2\right] \\
		&\qquad \le 2{\mathbb{E}}_{x|t}^{\mu, F}\left[\left|\frac{1}{t}A_t^{V} -\int_E V\phi_0^2\d\mathfrak{m}\right|^2\right] + 2{\mathbb{E}}_{x|t}^{\mu, F}\left[\left|\frac{1}{t}A_t^G -\iint_{E\times E} G(y,z){\mathcal J}_{\phi_0}({\d}y{\d}z)\right|^2\right] \\
		&\qquad \longrightarrow~~ 0 \qquad \text{as}~~ t \to \infty.
	\end{split}
\end{equation*}
Hence, the assertion immediately follows from Chebyshev's inequality.
\end{proof}

\section{Large deviation for additive functionals}\label{sec:LDP}  

In this section, we shall consider the measure $\mu$ as $\mu({\d}x)=\theta V(x)\mathfrak{m}({\d}x)$ for a Borel measurable function $V$ on $E$ and $\theta\in \mathbb R^1$. Our previous assumption that $\mu \in \mathcal S_K^1(X)$ is then equivalent to $V\in \mathcal S_K^1(X)$ and we recall that $F\in \mathcal K(E\times E)$ is assumed to be bounded symmetric and such that $N[|F|] \in \mathcal S_K^1(X)$. We exclude the trivial case for $F$, i.e. we suppose $F\not\equiv 0$.
In the sequel, we assume in addition that $V \in L^1(E; \mathfrak{m})$ and $N[|F|] \in L^1(E;\mathfrak{m})$. It directly entails that $N[|F|^T], N[F^2], N[(F^2)^T] \in \mathcal S_K^1(X) \cap L^1(E;\mathfrak{m})$ by the boundedness of $F$.	
The aim of this section is to study the large deviation principle for the pairs of continuous and purely discontinuous additive functionals $A_t^{V,F}$. The weak conditional law of large numbers for the special Feynman-Kac functional $W^{V,F}_t=\exp (-A_t^{V,F}){\bf 1}_{\{t<\zeta\}}$ helps to establish the large deviation principle for $A_t^{V,F}$ by using a rate function in a more direct representation via spectral functions. 
\vskip 0.2cm
For $\theta\in \mathbb{R}^1$, let 
\begin{equation}\label{eq_lbdMin}
\lambda_0(\theta):=\lambda_0({\theta V,\theta F})=\inf \left\{{\mathcal E}^{\theta V,\theta F}(u,u) : u \in {\mathcal D}({\mathcal E}), \int_{E}u^2{\d}\mathfrak{m}=1\right\},
\end{equation}
where
\begin{align*}
\mathcal E^{\theta V, \theta F}(u,u)&:={\mathcal E}(u,u) + \theta \int_Eu(x)^2V(x)\mathfrak{m}({\d}x) + \iint_{E\times E}u(x)u(y)\left(1-e^{-\theta F(x,y)}\right)N(x,{\d}y)\mathfrak{m}({\d}x).
\end{align*}
By virtue of Theorem \ref{GS}, there exists a ground state $\phi_0^{(\theta)}:=\phi_0^{\theta V, \theta F}$ of the bilinear form $(\mathcal E^{\theta V, \theta F}, \mathcal D(\mathcal E))$, that is, 
\begin{equation}
\int_E \phi_0^{(\theta)}(x)^2\mathfrak{m}({\rm d}x)=1 \quad \text{and}\quad \lambda_0(\theta)={\mathcal E}^{\theta V, \theta F}\left(\phi_0^{(\theta)},\phi_0^{(\theta)}\right).
\label{eq_def_lE}
\end{equation} 

We make the following assumption that is derived from {\bf (A3)},  where we focus on $\theta \in \mathbb{R}^1_- = (-\infty, 0)$ in order to study deviations where $A^{V, G}_t$ takes larger values than expected:
\begin{enumerate}
\item[{\bf (A3)$_{\theta}$}] For any $\theta \in \mathbb{R}^1_-$, the Feynman--Kac semigroup $\{p_t^{\theta V, \theta F}\}_{t\ge 0}$ is (IUC), that is, there exist constants $c_t(\theta)>0$ such that for all $t>0$ and $x,y \in E$,
\begin{equation*}
p_t^{\theta V,\theta F}(x,y) \le c_t(\theta) \phi_0^{(\theta)}(x)\phi_0^{(\theta)}(y).
\end{equation*}
\end{enumerate}
\noindent
Note that under {\bf (A3)$_{\theta}$}, 
$p_t^{\theta V, \theta F}$ is a Hilbert-Schmidt operator on $L^2(E;\mathfrak{m})$.
Therefore, $p_t^{\theta V, \theta F}$ is a compact operator and thus has a discrete spectrum for any $\theta \in \mathbb{R}^1_-$. 
Set 
\begin{equation*}
C_{V,F}(\theta):=-\lambda_0(\theta).
\end{equation*} 
The bilinear form  associated with $\{p_t^{\theta V, \theta F}\}_{t\ge 0}$ is $(\mathcal E^{\theta V, \theta F}, {\mathcal D}({\mathcal E}))$ which is an analytic function in $\theta$. These bilinear forms constitute a holomorphic family of type (A) in the terminology of \cite[p.395]{Kato:1982}. Thus, by \cite[Chapter VII; Theorems 1.8 and 4.2]{Kato:1982}, the principal $L^2$-eigenvalue $\lambda_0(\theta)$ of $(\mathcal E^{\theta V, \theta F}, {\mathcal D}({\mathcal E}))$ is differentiable in $\theta$ by the analytic perturbation theory and so is $C_{V,F}(\theta)$. 
\vskip 0.2cm
For $\vartheta \in \mathbb{R}^1_-$ and $u \in \mathcal D(\mathcal E)$, set ${\tt t}(\vartheta)[u]:=-{\mathcal E}^{\vartheta V, \vartheta F}(u,u)$. By the Taylor expansion at $\vartheta=\theta$, we see that ${\tt t}(\vartheta)[u]$ can be expressed as 
\begin{align*}
{\tt t}(\vartheta)[u]&=-{\mathcal E}(u,u)-\vartheta \int_E u^2V{\d}\mathfrak{m} - \iint_{E\times E}u(x)u(y)\left(1-e^{-\vartheta F(x,y)}\right)N(x,{\d}y)\mathfrak{m}({\d}x) \\
&=-{\mathcal E}(u,u)-\theta \int_E u^2V{\d}\mathfrak{m} - \iint_{E\times E}u(x)u(y)\left(1-e^{-\theta F(x,y)}\right)N(x,{\d}y)\mathfrak{m}({\d}x) \\
&\qquad \quad - (\vartheta-\theta)\left(\int_Eu^2V{\d}\mathfrak{m}+\iint_{E\times E}F(x,y)u(x)u(y)e^{-\theta F(x,y)}N(x,{\d}y)\mathfrak{m}({\d}x)\right) \\
&\qquad \quad + \sum_{n=2}^\infty (\vartheta-\theta)^n \iint_{E\times E}\frac{(-1)^nF^n(x,y)}{n!}u(x)u(y)e^{-\theta F(x,y)}N(x,{\d}y)\mathfrak{m}({\d}x)\\
&:={\tt t}^{(0)}(\theta)[u]+(\vartheta-\theta){\tt t}^{(1)}(\theta)[u] + \sum_{n=2}^\infty (\vartheta-\theta)^n {\tt t}^{(n)}(\theta)[u],
\end{align*}
By virtue of \cite[Chapter VII (4.44)]{Kato:1982}, the first differential coefficient of $C_{V,F}(\vartheta)$ at $\vartheta=\theta$ is given as follows:
\begin{equation}\label{Derivative}
\begin{split}
C_{V,F}'(\theta) &= {\tt t}^{(1)}(\theta)[\phi_0^{(\theta)}] \\
&=-\int_E V(x)\phi_0^{(\theta)}(x)^2 \mathfrak{m}({\rm d}x) - \iint_{E\times E}F(x,y)\phi_0^{(\theta)}(x)\phi_0^{(\theta)}(y)e^{-\theta F(x,y)}N(x,{\rm d}y)\mathfrak{m}({\rm d}x).
\end{split}
\end{equation}
Note that $C_{V,F}$ is a function that is not only convex
(cf \cite[Lemma 5.5]{CT}), but actually strictly convex as stated in the next lemma.
Thus, $C_{V,F}'(\theta)$ is a strictly increasing function in $\mathbb{R}^1_-$.

\begin{lem}
\label{str_conv}
Suppose that {\bf (A1)}, {\bf (A2)} and {\bf (A3)$_{\theta}$} hold.
Then, the function $\theta\mapsto C_{V,F}(\theta)$ is strictly convex on $\mathbb{R}^1_-$.
\end{lem}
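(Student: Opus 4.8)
The plan is to show strict convexity of $C_{V,F}$ by establishing that its second derivative is strictly positive. Since $C_{V,F}(\theta) = -\lambda_2(\theta)$ and the family of quadratic forms $({\tt t}(\theta), \D(\E))$ is a holomorphic family of type (A) with $\phi_0^{(\theta)}$ the (simple, by irreducibility) ground state, I would invoke analytic perturbation theory — specifically the Rellich--Kato formulas quoted from \cite[Chapter VII]{Kato:1982}, of which the first-order version \eqref{Derivative} is already recorded — to compute
\begin{align*}
C_{V,F}''(\theta) = 2\,{\tt t}^{(2)}(\theta)[\phi_0^{(\theta)}] + 2\sum_{k\neq 0}\frac{\big({\tt t}^{(1)}(\theta)[\phi_0^{(\theta)},\psi_k^{(\theta)}]\big)^2}{\lambda_k(\theta)-\lambda_2(\theta)},
\end{align*}
where $(\psi_k^{(\theta)})$ are the remaining $L^2(E;\Fm)$-eigenfunctions with eigenvalues $\lambda_k(\theta)>\lambda_2(\theta)$ and ${\tt t}^{(1)}(\theta)[\cdot,\cdot]$ is the symmetric bilinear form polarising ${\tt t}^{(1)}(\theta)[\cdot]$. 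The second sum is manifestly nonnegative (each denominator is positive because $\lambda_2$ is the bottom of the spectrum and isolated). So it suffices to show that the first term is strictly positive, i.e. that
\begin{align*}
{\tt t}^{(2)}(\theta)[\phi_0^{(\theta)}] = \iint_{E\times E}\frac{F(x,y)^2}{2}\,\phi_0^{(\theta)}(x)\phi_0^{(\theta)}(y)\,e^{\theta F(x,y)}\,N(x,{\d}y)\Fm({\d}x) > 0.
\end{align*}

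This strict positivity is where the hypothesis that $F$ is \emph{non-zero} (imposed at the start of Section~\ref{sec:LDP}) and of Kato class enters. Because $\phi_0^{(\theta)}$ is strictly positive and continuous on $E$ by {\bf (A1)} (irreducibility and strong Feller for $p_t^{\theta V,\theta F}$) together with {\bf (A3)'}, and $e^{\theta F}\geq e^{-|\theta|\|F\|_\infty}>0$, the integrand $F(x,y)^2\phi_0^{(\theta)}(x)\phi_0^{(\theta)}(y)e^{\theta F(x,y)}$ is nonnegative and is strictly positive precisely on the set $\{F>0\}$. Since $F$ is non-zero, symmetric, bounded and measurable, the set $\{(x,y): F(x,y)>0\}$ has positive measure for $N(x,{\d}y)\Fm({\d}x)$; indeed if it did not, then $F=0$ $N(x,{\d}y)\Fm({\d}x)$-a.e., which would make $A^F_t = 0$ $\bP_x$-a.s. (via the L\'evy system formula, as the expected number of jumps with $F(X_{s-},X_s)>0$ would vanish), contradicting that $F$ is a genuine non-zero jump function in the setting. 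Hence ${\tt t}^{(2)}(\theta)[\phi_0^{(\theta)}]>0$ and therefore $C_{V,F}''(\theta)>0$ for every $\theta\in\R^1$, which gives strict convexity.

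The main obstacle is the rigorous justification that $\theta\mapsto C_{V,F}(\theta)$ is twice differentiable with the above formula for $C_{V,F}''$ — this requires checking that the perturbation $\vartheta\mapsto {\tt t}(\vartheta)$ is indeed type (A) near each $\theta$ (the Taylor series displayed before the lemma, convergent because $F$ is bounded, already shows the form-valued map is analytic on the common domain $\D(\E)$), that $\lambda_2(\theta)$ is a simple isolated eigenvalue (isolation from {\bf (A3)'} via compactness of $p_t^{\theta V,\theta F}$, simplicity from irreducibility), and that the Rellich--Kato reduction series converges. An alternative, softer route that avoids the second-derivative computation entirely: one can argue directly from the variational characterisation \eqref{eq_lbdMin}. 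Writing $C_{V,F}(\theta) = \sup_{\|u\|_{\Fm}=1}{\tt t}(\theta)[u]$, strict convexity would follow if for each fixed admissible $u$ the map $\theta\mapsto {\tt t}(\theta)[u]$ is convex — which it is, being $-\E(u,u)+\theta\int u^2V\,d\Fm$ plus a sum of terms $\iint u(x)u(y)(e^{\theta F(x,y)}-1)N(x,{\d}y)\Fm({\d}x)$, each convex in $\theta$ since $\theta\mapsto e^{\theta F}$ is convex and $u(x)u(y)$ may be negative, so this needs care — and then one must upgrade pointwise-in-$u$ convexity of a supremum to \emph{strict} convexity, which again forces one back to identifying where the maximiser concentrates and using non-degeneracy of $F$. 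I expect the cleanest write-up to be the perturbation-theoretic one, citing \cite[Chapter VII, (4.44) and the following reduction formulas]{Kato:1982} for $C_{V,F}''$, with the positivity of ${\tt t}^{(2)}(\theta)[\phi_0^{(\theta)}]$ as the substantive point.
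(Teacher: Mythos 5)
Your main argument (second-order Rellich--Kato perturbation theory) is a genuinely different route from the paper's, and it is correct in principle, though technically heavier: it requires establishing that $\lambda_2(\theta)$ is a simple, isolated eigenvalue, that the form family is analytic of type (A), and that the reduction series converges, before one can even write down $C_{V,F}''$. Interestingly, the paper's actual proof is essentially your "alternative, softer route," but with a twist that dissolves exactly the obstacle you flagged. You worried that $u(x)u(y)$ can be negative, so $\theta\mapsto {\tt t}(\theta)[u]$ need not be convex for arbitrary $u$; the paper sidesteps this by never trying to prove that. Instead, fix $\theta_1,\theta_2$ and $\lambda\in(0,1)$, set $\theta_*=\lambda\theta_1+(1-\lambda)\theta_2$, and evaluate everything at the single test function $u=\phi_0^{(\theta_*)}$. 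From the variational characterization one gets the one-sided bounds $C_{V,F}(\theta_i)\ge{\tt t}(\theta_i)[\phi_0^{(\theta_*)}]$ for $i=1,2$, while equality $C_{V,F}(\theta_*)={\tt t}(\theta_*)[\phi_0^{(\theta_*)}]$ holds at the midpoint by definition of the ground state. The terms $-\E(u,u)$ and $\theta\int u^2V\,{\rm d}\Fm$ are affine in $\theta$ and combine exactly; the only genuinely $\theta$-nonlinear piece is the jump term, and since $\phi_0^{(\theta_*)}(x)\phi_0^{(\theta_*)}(y)>0$ (strict positivity of the ground state), the strict convexity of $\theta\mapsto e^{\theta F(x,y)}$ on $\{F>0\}$ forces a strict inequality after integration against $N(x,{\d}y)\Fm({\d}x)$ — the same non-degeneracy-of-$F$ observation you made for ${\tt t}^{(2)}(\theta)[\phi_0^{(\theta)}]>0$. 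So the two proofs rest on the same substantive input (positivity of the ground state plus $F\not\equiv 0$ on a set of positive $J$-measure), but the paper's evaluation-at-the-midpoint trick gets strict convexity directly from the variational formula without invoking differentiability of $C_{V,F}$ at all, whereas your route buys an explicit formula for $C_{V,F}''$ at the price of the perturbation-theoretic overhead.
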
 
\begin{proof}
For any $\theta_1, \theta_2\in \mathbb{R}^1_-$ and any fixed $\xi \in (0, 1)$, let $\theta_* = \xi \theta_1 + (1-\xi) \theta_2$.
Since $\phi_0^{(\theta)}$ is the minimizer of \eqref{eq_lbdMin}, $\mathcal E^{\theta V, \theta F}(\phi_0^{(\theta)},\phi_0^{(\theta)}) \le \mathcal E^{\theta V, \theta F}(\phi_0^{(\theta_*)},\phi_0^{(\theta_*)})$ for any $\theta \in \mathbb{R}^1_-$. So we have
\begin{equation*}
\xi\, C_{V,F}(\theta_1) +(1-\xi)\, C_{V,F}(\theta_2)
\ge \xi\, {\tt t}(\theta_1)\left[\phi_0^{(\theta_*)}\right] 
+ (1-\xi)\, {\tt t}(\theta_2)\left[\phi_0^{(\theta_*)}\right].
\end{equation*}
The right-hand side above has the expression below: 
\begin{align*}
	&-{\mathcal E}\left(\phi_0^{(\theta_*)},\phi_0^{(\theta_*)}\right)-\theta_* \int_E V(x)\phi_0^{(\theta_*)}(x)^2\mathfrak{m}({\rm d}x) 	\\
	&\qquad \qquad \qquad \quad ~~~- \iint_{E\times E}\phi_0^{(\theta_*)}(x)\phi_0^{(\theta_*)}(y)\left(1-\xi\,e^{-\theta_1 F(x,y)}- (1-\xi)\,e^{-\theta_2 F(x,y)}\right)N(x,{\d}y)\mathfrak{m}({\d}x).
\end{align*}
Because $\theta \mapsto e^{-\theta F}$ is strictly convex in $\theta$ and $\phi_0^{(\theta_*)}>0$, 
\begin{align*}
	&\xi\, C_{V,F}(\theta_1) +(1-\xi)\, C_{V,F}(\theta_2) \\
	&\quad > -{\mathcal E}\left(\phi_0^{(\theta_*)},\phi_0^{(\theta_*)}\right)-\theta_* \int_E V(x)\phi_0^{(\theta_*)}(x)^2\mathfrak{m}({\rm d}x) \\
	&\quad \qquad \qquad \qquad \quad \quad ~~- \iint_{E\times E}\phi_0^{(\theta_*)}(x)\phi_0^{(\theta_*)}(y)\left(1-e^{-\theta_* F(x,y)}\right)N(x,{\d}y)\mathfrak{m}({\d}x) \\
	&\quad = -\mathcal E^{\theta_* V, \theta_* F}\left(\phi_0^{(\theta_*)},\phi_0^{(\theta_*)}\right)  = C_{V,F}(\xi \theta_1 + (1-\xi)\theta_2).
\end{align*}
It concludes the proof of Lemma~\ref{str_conv}.
\end{proof}

Let $\Psi_{V,F}(\theta)$ be the function given by
\begin{equation*}
\Psi_{V,F}(\theta)
=\int_E V(x)\phi_0^{(\theta)}(x)^2\mathfrak{m}({\rm d}x) + \iint_{E\times E}F(x,y){\mathcal J}_{\phi_0^{(\theta)}}({\d}x{\d}y).
\end{equation*}
By \eqref{Derivative}, we see that $\Psi_{V,F}(\theta)=-C_{V,F}'(\theta)$. Hence $\Psi_{V,F}(\theta)$ is strictly decreasing and continuous on $\R^1$. Denote by $\Psi_{V,F}^{-1}$ the inverse function of $\Psi_{V,F}$.  
Set $\Psi_{V,F}(\mathbb{R}_-^1):=\{\Psi_{V,F}(\theta) : \theta \in \mathbb{R}_-^1\}$ and write $\Psi_{V,F}(\mathbb{R}_-^1)^o$ as the interior of $\Psi_{V,F}(\mathbb{R}_-^1)$. We  note that $\Psi_{V,F}(\mathbb{R}_-^1)^o\neq \emptyset$ because of the strict convexity of $C_{V,F}$.
\vskip 0.2cm
Now, we have the following large deviation for $A_t^{V,F}$.

\begin{thm}\label{LDPforVF}
Suppose that {\bf (A1)}, {\bf (A2)} and {\bf (A3)$_{\theta}$} hold. 
Then, for any $\gamma \in \Psi_{V,F}(\mathbb{R}_-^1)^o$ and any $x \in E$,
\begin{equation}\label{LDexpress}
\lim_{t\to \infty}\frac{1}{t}\log {\mathbb{P}}_x\left(\frac{A_t^{V,F}}{t} \in [\gamma, \infty),~t< \zeta\right)
=C_{V,F}(\theta_\gamma)-\theta_\gamma C_{V,F}'(\theta_\gamma),
\end{equation}
where $\theta_\gamma$ is the non-positive real number given by $\theta_\gamma=\Psi^{-1}_{V,F}(\gamma)$.  
\end{thm}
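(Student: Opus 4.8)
The plan is to follow the classical Gärtner–Ellis route, with the upper bound adapted through a change of measure driven by the Feynman–Kac weight $W_t^{\theta V,\theta F}$ and the lower bound supplied by the weak law of large numbers (Theorem~\ref{VGWLLN}) for the tilted process. The key logarithmic-moment generating function is
\[
\lim_{t\to\infty}\frac1t\log \bE_x\!\left[\exp\!\big(\theta A_t^{V,F}\big)\,;\,t<\zeta\right] = \lim_{t\to\infty}\frac1t\log p_t^{\theta V,\theta F}\idg{}(x) = -\lambda_2(\theta) = C_{V,F}(\theta),
\]
which holds for every $\theta\in\R^1$ under {\bf (A3)'}: the upper estimate $p_t^{\theta V,\theta F}\idg{}(x)\le c_t(\theta)\phi_0^{(\theta)}(x)\int_E\phi_0^{(\theta)}\d\Fm\cdot e^{-\lambda_2(\theta)t}$ comes directly from intrinsic ultracontractivity, while the matching lower bound follows from the ground-state transform together with the Fukushima ergodic theorem (exactly as in \eqref{vFt_cvg}), giving $\varPhi_t^{(\theta)}(x)=e^{\lambda_2(\theta)t}\bE_x[W_t^{\theta V,\theta F}]\to\phi_0^{(\theta)}(x)\int_E\phi_0^{(\theta)}\d\Fm>0$. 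Here I write $\varPhi_t^{(\theta)}$, $\bP_{x|t}^{\theta V,\theta F}$, etc., for the objects of Section~3 built from the pair $(\theta V,\theta F)$ in place of $(\mu,F)$.

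\textbf{Upper bound.} Fix $\gamma\in\Psi_{V,F}(\R_+^1)^o$ and set $\theta_\gamma=\Psi_{V,F}^{-1}(\gamma)$; since $\Psi_{V,F}=C_{V,F}'$ is strictly increasing by Lemma~\ref{str_conv} and $\gamma$ lies in the interior of the range, $\theta_\gamma>0$ is well defined and $\theta_\gamma$ is the (unique) point where the Legendre transform $\sup_{\theta\ge0}(\theta\gamma-C_{V,F}(\theta))$ is attained, with value $\theta_\gamma\gamma-C_{V,F}(\theta_\gamma)=\theta_\gamma C_{V,F}'(\theta_\gamma)-C_{V,F}(\theta_\gamma)$. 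For the upper bound I use the exponential Chebyshev inequality with this $\theta_\gamma>0$:
\[
\bP_x\!\left(\tfrac{A_t^{V,F}}{t}\ge\gamma,\ t<\zeta\right)\le e^{-\theta_\gamma\gamma t}\,\bE_x\!\left[e^{\theta_\gamma A_t^{V,F}}\,;\,t<\zeta\right]=e^{-\theta_\gamma\gamma t}\,p_t^{\theta_\gamma V,\theta_\gamma F}\idg{}(x),
\]
so that $\limsup_{t\to\infty}\frac1t\log\bP_x(\cdots)\le-\theta_\gamma\gamma+C_{V,F}(\theta_\gamma)=C_{V,F}(\theta_\gamma)-\theta_\gamma C_{V,F}'(\theta_\gamma)$, using the limit of the first paragraph.

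\textbf{Lower bound.} For the matching lower bound I localise near $\gamma$ and exploit the tilted measure. For $\varepsilon>0$,
\[
\bP_x\!\left(\tfrac{A_t^{V,F}}{t}\ge\gamma,\ t<\zeta\right)\ge\bP_x\!\left(\tfrac{A_t^{V,F}}{t}\in[\gamma,\gamma+\varepsilon),\ t<\zeta\right)=\bE_x\!\left[e^{-\theta_\gamma A_t^{V,F}}\idc{A_t^{V,F}/t\in[\gamma,\gamma+\varepsilon)}\,W_t^{\theta_\gamma V,\theta_\gamma F}\right],
\]
and on the indicator event $e^{-\theta_\gamma A_t^{V,F}}\ge e^{-\theta_\gamma(\gamma+\varepsilon)t}$ (here $\theta_\gamma>0$ is used again), so the right-hand side is bounded below by $e^{-\theta_\gamma(\gamma+\varepsilon)t}\,\bE_x[W_t^{\theta_\gamma V,\theta_\gamma F}]\cdot\bP_{x|t}^{\theta_\gamma V,\theta_\gamma F}\big(A_t^{V,F}/t\in[\gamma,\gamma+\varepsilon)\big)$. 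By \eqref{vFt_cvg} applied to the pair $(\theta_\gamma V,\theta_\gamma F)$, $\frac1t\log\bE_x[W_t^{\theta_\gamma V,\theta_\gamma F}]\to-\lambda_2(\theta_\gamma)=C_{V,F}(\theta_\gamma)$; and by Theorem~\ref{VGWLLN} for the process tilted by $(\theta_\gamma V,\theta_\gamma F)$ — whose hypotheses hold because $V,N^{\phi_0^{(\theta_\gamma)}}[F],N^{\phi_0^{(\theta_\gamma)}}[F^2]\in L^1(E;(\phi_0^{(\theta_\gamma)})^2\Fm)$ by the standing assumptions of this section and boundedness of $F$ — the conditional average $\frac1t A_t^{V,F}$ converges in $\bP_{x|t}^{\theta_\gamma V,\theta_\gamma F}$-probability to $\int_E V(\phi_0^{(\theta_\gamma)})^2\d\Fm+\iint F\,\d\mathcal J_{\phi_0^{(\theta_\gamma)}}=\Psi_{V,F}(\theta_\gamma)=C_{V,F}'(\theta_\gamma)=\gamma$. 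Since $\gamma$ is the \emph{left} endpoint of $[\gamma,\gamma+\varepsilon)$ this is the delicate point: the probability of landing in a half-open interval whose closed end is the mean need not tend to $1$. I resolve this by a two-sided tilt — choosing $\theta'=\Psi_{V,F}^{-1}(\gamma+\varepsilon/2)\in(\theta_\gamma,\infty)$ so that under $\bP_{x|t}^{\theta' V,\theta' F}$ the mean is $\gamma+\varepsilon/2$, an interior point of $(\gamma,\gamma+\varepsilon)$, whence that conditional probability $\to1$; then repeat the lower-bound computation with $\theta'$ in place of $\theta_\gamma$, obtaining $\liminf\frac1t\log\bP_x(\cdots)\ge-\theta'(\gamma+\varepsilon)+C_{V,F}(\theta')$, and finally let $\varepsilon\downarrow0$, using continuity of $\theta\mapsto\theta,\ C_{V,F}(\theta)$ and $\theta'\to\theta_\gamma$, to recover $C_{V,F}(\theta_\gamma)-\theta_\gamma\gamma$. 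Combining the two bounds and recalling $\gamma=C_{V,F}'(\theta_\gamma)$ gives \eqref{LDexpress}.

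\textbf{Main obstacle.} The genuine difficulty is the lower bound at the boundary point $\gamma$ just discussed — ensuring the tilted weak law places asymptotically full mass on the interval $[\gamma,\infty)$ rather than on its complement near $\gamma$ — together with checking that the weak-law hypotheses of Theorem~\ref{VGWLLN} transfer to each tilted pair $(\theta V,\theta F)$ uniformly enough to pass to the limit; the analyticity/differentiability of $C_{V,F}$ (already recorded via \cite{Kato:1982}) and strict convexity (Lemma~\ref{str_conv}), which guarantee $\theta_\gamma$ exists, is unique, and varies continuously with $\gamma$, are what make the $\varepsilon\downarrow0$ passage legitimate.
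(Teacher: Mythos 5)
Your proposal is correct and follows essentially the same path as the paper: the paper cites the Gärtner–Ellis theorem for the upper bound where you use the direct exponential Chebyshev inequality (the same estimate the theorem would deliver here), and for the lower bound the paper implements exactly your over-tilting idea, parametrizing by a sequence $\theta_n\downarrow\theta_\gamma$ and setting $\varepsilon_n=\Psi_{V,F}(\theta_n)-\gamma$ so that the target interval $[\gamma,\gamma+2\varepsilon_n]=[\Psi_{V,F}(\theta_n)-\varepsilon_n,\Psi_{V,F}(\theta_n)+\varepsilon_n]$ is centered at the tilted mean — the same device as your choice $\theta'=\Psi_{V,F}^{-1}(\gamma+\varepsilon/2)$. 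Your explicit flagging of the boundary-point issue (the tilted mean sitting at the closed end of $[\gamma,\gamma+\varepsilon)$) is a point the paper handles implicitly by the symmetric choice of interval, and both proofs leave implicit the verification that the hypotheses of Theorem~\ref{VGWLLN} transfer to each tilted pair.
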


\begin{proof}
First, we prove the upper bound. For $\theta \in \mathbb{R}^1_-$, let $W_t^{(\theta)} := W_t^{\theta V, \theta F} = \exp (-\theta A^{V,F}_t)\mathbf{1}_{\{ t<\zeta \}}$. By \eqref{vFt_cvg}, the logarithmic moment generating function of $-\theta A_t^{V,F}$ has the limit $C_{V,F}(\theta)$:
$	\lim_{t\to \infty}\frac{1}{t}\log {\mathbb{E}}_x\left[W_t^{(\theta)}\right]=C_{V,F}(\theta).$

Since the function $\Psi_{V,F}(\theta)$ is strictly decreasing and continuous on $\mathbb{R}^1_-$, there exists $\theta_\gamma \in \mathbb{R}_-^1$ such that $\gamma=\Psi_{V,F}(\theta_\gamma)$ for any $\gamma \in \Psi_{V,F}(\mathbb{R}_-^1)^o$. Then, by the G\"artner-Ellis theorem (\cite{DZ:1998}), we see that for any $x \in E$, 
\begin{align*}
\begin{split}
	\limsup_{t\to \infty}\frac{1}{t}\log {\mathbb{P}}_x\left(\frac{A_t^{V,F}}{t} \in [\gamma, \infty),~t<\zeta\right) &\,=\limsup_{t\to \infty}\frac{1}{t}\log {\mathbb{P}}_x\left(\frac{-A_t^{V,F}}{t} \in (-\infty,-\gamma],~t<\zeta\right) \\
	&\le -\inf_{\lambda \in (-\infty,-\gamma]} \sup_{\theta \in \mathbb{R}^1_-}\left\{\lambda \theta - C_{V,F}(\theta)\right\} \\
	&\le \sup_{\lambda \in (-\infty, -\Psi_{V,F}(\theta_\gamma)]}\left\{C_{V,F}(\theta_\gamma)-\lambda \theta_\gamma\right\} \\
	&\le C_{V,F}(\theta_\gamma) + \theta_\gamma\Psi_{V,F}(\theta_\gamma) \\
	&= C_{V,F}(\theta_\gamma) - \theta_\gamma C_{V,F}'(\theta_\gamma).
\end{split}
\end{align*}
Next, we turn to the proof of the lower bound. Let $\{\theta_n\} \subset \mathbb{R}_-^1$ be a sequence such that $\theta_n \uparrow \theta_\gamma$ as $n\to \infty$. Put $\varepsilon_n:=\Psi_{V,F}(\theta_n)-\Psi_{V,F}(\theta_\gamma)=\Psi_{V,F}(\theta_n) -\gamma >0$. Then, we have for large enough $t>0$,
\begin{equation*}
\begin{split}
	&{\mathbb{P}}_x\left(\frac{A_t^{V,F}}{t} \in [\gamma,\infty),~t < \zeta\right) \ge {\mathbb{P}}_x\left(\frac{A_t^{V,F}}{t} \in [\gamma, \gamma+2\varepsilon_n],~t<\zeta\right) \\
	& \qquad=e^{C_{V,F}(\theta_n)\,t}e^{-C_{V,F}(\theta_n)\,t} {\mathbb{E}}_x\left[e^{\theta_n A_t^{V,F}}e^{-\theta_n A_t^{V,F}}~;~\frac{A_t^{V,F}}{t} \in [\gamma, \gamma+2\varepsilon_n],~t<\zeta \right]\\
	&	\qquad \ge e^{\big(C_{V,F}(\theta_n)+(\gamma+2\varepsilon_n) \theta_n\big) t}\; e^{\lambda_0 (\theta_n)t} {\mathbb{E}}_x\left[e^{-\theta_n A_t^{V,F}} ;~\frac{A_t^{V,F}}{t} \in \big[\Psi_{V,F}(\theta_n)-\varepsilon_n, \Psi_{V,F}(\theta_n)+\varepsilon_n\big],~t<\zeta\right]. 
\end{split}
\end{equation*}
By applying Theorem \ref{VGWLLN} in combination with Lemma~\ref{lem_AF_id} with $\mu=\theta_n V$, $F=\theta_n F$ and $G=F$, and using the fact that
\begin{equation*}
e^{\lambda_0(\theta_n)t} {\mathbb{E}}_x\left[e^{-\theta_n A_t^{V,F}} ;~t<\zeta\right]=\widehat{p}_t^{\,\theta_n V,\,\theta_n F}{\bf 1}(x)
~~ \stackrel{t\to \infty}{\longrightarrow}~~ 
\phi_0^{(\theta_n)}(x)\int_E \phi_0^{(\theta_n)}{\rm d}\mathfrak{m},
\end{equation*} 
one can see
\begin{equation*}
e^{\lambda_0(\theta_n)t}
	{\mathbb{E}}_x\left[e^{-\theta_n A_t^{V,F}} ;~\frac{A_t^{V,F}}{t} \in \big[\Psi_{V,F}(\theta_n)-\varepsilon_n, \Psi_{V,F}(\theta_n)+\varepsilon_n\big],~t<\zeta\right]
~~ \stackrel{t\to \infty}{\longrightarrow}~~ 
\phi_0^{(\theta_n)}(x)\int_E \phi_0^{(\theta_n)}{\rm d}\mathfrak{m}.
\end{equation*}
Hence we have
\begin{equation*}
\liminf_{t\to \infty}\frac{1}{t}\log {\mathbb{P}}_x\left(\frac{1}{t}A_t^{V,F} \in [\gamma,\infty),~ t<\zeta\right) \ge C_{V,F}(\theta_n) + (\gamma+2\varepsilon_n)\,\theta_n.
\end{equation*}
Letting $n\to \infty$, the right-hand side of the above converges to $C_{V,F}(\theta_\gamma) + \gamma \,\theta_\gamma$ because $\varepsilon_n =\Psi_{V,F}(\theta_n) -\gamma=\Psi_{V,F}(\theta_n) - \Psi_{V,F}(\theta_\gamma) \to 0$ and $C_{V,F}(\theta_n) \to C_{V,F}(\theta_\gamma)$ as $n\to \infty$ by the continuities of $\Psi_{V,F}(\theta)$ and $C_{V,F}(\theta)$, respectively. This leads us to the lower bound  given by $C_{V,F}(\theta_\gamma) -\theta_\gamma C_{V,F}'(\theta_\gamma)$. 
\end{proof}

We can represent the rate $C_{V,F}(\theta_\gamma) -\theta_\gamma C_{V,F}'(\theta_\gamma)$ in \eqref{LDexpress} in a more direct way via a bilinear form:

\begin{cor}\label{LDPrem}
Suppose that {\bf (A1)}, {\bf (A2)} and {\bf (A3)$_{\theta}$} hold.
Then, for any $\gamma \in \Psi_{V,F}(\mathbb{R}_-^1)^o$ and any $x \in E$,
\begin{equation*}
\lim_{t\to \infty}\frac{1}{t}\log {\mathbb{P}}_x\left(\frac{A_t^{V,F}}{t} \in [\gamma, \infty),~t<\zeta\right)
=-{\mathcal A}\left(\phi_0^{(\theta_\gamma)},\phi_0^{(\theta_\gamma)}\right),
\end{equation*}
where $\theta_\gamma$ is the non-positive real number given by $\theta_\gamma=\Psi^{-1}_{V,F}(\gamma)$ and 
\begin{align*}
{\mathcal A}\left(\phi_0^{(\theta_\gamma)},\phi_0^{(\theta_\gamma)}\right)&={\mathcal E}\left(\phi_0^{(\theta_\gamma)},\phi_0^{(\theta_\gamma)}\right) \\
&\quad + \iint_{E\times E}\phi_0^{(\theta_\gamma)}(x)\phi_0^{(\theta_\gamma)}(y)\left(1-e^{-\theta_\gamma F(x,y)}-\theta_\gamma F(x,y) e^{-\theta_\gamma F(x,y)}\right)N(x,{\d}y)\mathfrak{m}({\d}x).
\end{align*}  
\end{cor}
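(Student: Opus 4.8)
The plan is to start from the conclusion of Theorem~\ref{LDPforVF}, which already gives
\begin{align*}
\lim_{t\to \infty}\frac{1}{t}\log {\bP}_x\left(\frac{A_t^{V,F}}{t} \in [\gamma, \infty), \ t<\zeta\right)
=C_{V,F}(\theta_\gamma)-\theta_\gamma C_{V,F}'(\theta_\gamma),
\end{align*}
and simply to re-express the right-hand side in terms of the quadratic form evaluated at $\phi_0^{(\theta_\gamma)}$. Thus the whole corollary reduces to the algebraic identity
\begin{align*}
C_{V,F}(\theta_\gamma)-\theta_\gamma C_{V,F}'(\theta_\gamma) = -{\mathcal A}\left(\phi_0^{(\theta_\gamma)},\phi_0^{(\theta_\gamma)}\right),
\end{align*}
which I would verify by plugging in the known closed forms of both $C_{V,F}(\theta)$ and $C_{V,F}'(\theta)$.

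First I would recall that $C_{V,F}(\theta)=-\lambda_2(\theta) = -\E^{\theta V,\theta F}(\phi_0^{(\theta)},\phi_0^{(\theta)}) = {\tt t}(\theta)[\phi_0^{(\theta)}]$, so that, using the definition of $\E^{\theta V,\theta F}$ and $\int_E (\phi_0^{(\theta)})^2\d\Fm = 1$,
\begin{align*}
C_{V,F}(\theta) = -{\E}\left(\phi_0^{(\theta)},\phi_0^{(\theta)}\right) + \theta\int_E V (\phi_0^{(\theta)})^2\d\Fm + \iint_{E\times E}\phi_0^{(\theta)}(x)\phi_0^{(\theta)}(y)\left(e^{\theta F(x,y)}-1\right)N(x,{\d}y)\Fm({\d}x).
\end{align*}
Next, from \eqref{Derivative} I would use
\begin{align*}
C_{V,F}'(\theta) = \int_E V (\phi_0^{(\theta)})^2\d\Fm + \iint_{E\times E}F(x,y)\phi_0^{(\theta)}(x)\phi_0^{(\theta)}(y)e^{\theta F(x,y)}N(x,{\d}y)\Fm({\d}x).
\end{align*}

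Then I would form $C_{V,F}(\theta)-\theta C_{V,F}'(\theta)$ at $\theta=\theta_\gamma$: the two terms $\theta\int_E V(\phi_0^{(\theta)})^2\d\Fm$ cancel, the strongly local / Dirichlet-form term $-{\E}(\phi_0^{(\theta)},\phi_0^{(\theta)})$ is untouched, and the jump contribution collapses to
\begin{align*}
\iint_{E\times E}\phi_0^{(\theta_\gamma)}(x)\phi_0^{(\theta_\gamma)}(y)\left[\left(e^{\theta_\gamma F(x,y)}-1\right) - \theta_\gamma F(x,y) e^{\theta_\gamma F(x,y)}\right]N(x,{\d}y)\Fm({\d}x),
\end{align*}
so that $C_{V,F}(\theta_\gamma)-\theta_\gamma C_{V,F}'(\theta_\gamma)$ equals $-{\E}(\phi_0^{(\theta_\gamma)},\phi_0^{(\theta_\gamma)})$ minus $\iint_{E\times E}\phi_0^{(\theta_\gamma)}(x)\phi_0^{(\theta_\gamma)}(y)\left(e^{\theta_\gamma F}(\theta_\gamma F-1)+1\right)N(x,{\d}y)\Fm({\d}x)$, which is precisely $-{\mathcal A}(\phi_0^{(\theta_\gamma)},\phi_0^{(\theta_\gamma)})$ by the definition of ${\mathcal A}$ in the statement. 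Comparing with Theorem~\ref{LDPforVF} and substituting $\theta_\gamma = \Psi_{V,F}^{-1}(\gamma)$ (recalling $\Psi_{V,F}=C_{V,F}'$) finishes the proof.

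The calculation involves no analytic subtlety: the only thing to watch is that all three terms in $C_{V,F}(\theta)$ and in ${\mathcal A}$ are finite, which is guaranteed by the standing hypotheses of the section ($V, N^{\phi_0}[F], N^{\phi_0}[F^2] \in L^1(E;\phi_0^2\Fm)$, boundedness of $F$, Kato class) together with the IUC assumption {\bf (A3)'} ensuring $\phi_0^{(\theta)}$ is bounded and integrable; these also justify the use of \eqref{Derivative}. The main (and essentially only) obstacle is purely bookkeeping: keeping straight the sign conventions ($C_{V,F}=-\lambda_2$, ${\tt t} = -\E^{\theta V,\theta F}$) and the relation between $\iint \cdot N(x,\d y)\Fm(\d x)$ and the measure ${\mathcal J}_{\phi_0^{(\theta)}}$ so that the jump terms combine correctly into the stated expression for ${\mathcal A}$. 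Since Theorem~\ref{LDPforVF} and Lemma~\ref{str_conv} are invoked verbatim, nothing else is needed.
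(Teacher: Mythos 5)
Your proposal is correct and matches the (implicit) argument the paper intends: the corollary is stated without an explicit proof, and the only thing to do is exactly what you did — substitute the closed forms of $C_{V,F}(\theta) = -\E^{\theta V,\theta F}(\phi_0^{(\theta)},\phi_0^{(\theta)})$ and $C_{V,F}'(\theta)$ from \eqref{Derivative} into $C_{V,F}(\theta_\gamma)-\theta_\gamma C_{V,F}'(\theta_\gamma)$, cancel the $\theta\int V(\phi_0^{(\theta)})^2\d\Fm$ terms, and observe that $(e^{\theta F}-1)-\theta F e^{\theta F} = -\bigl(e^{\theta F}(\theta F-1)+1\bigr)$. Your bookkeeping of signs and finiteness considerations is accurate, so the verification is complete.
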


\section{Examples}\label{sec:Example}  

We give some examples satisfying the assumption {\bf (A3)}, by relying on the framework proposed in \cite{Kaleta:2015}.
 Examples satisfying {\bf (A3)$_\theta$} are directly derived from them.
\vskip 0.1cm
Let $X=(X_t, {\mathbb{P}}_x)$ be a symmetric L\'evy process in $\mathbb{R}^d$ with $d\ge 1$, that is, ${\bf X}$ is a symmetric Markov process satisfying the strong Markov property and has right continuous paths with left limits (c\`adl\`ag paths). It is known that $X$ is completely determined by its characteristic exponent $\psi$ given by the L\'evy-Khintchine formula: for $\xi \in \mathbb{R}^d$, ${\mathbb{E}}_0[e^{i\xi \cdot X_t}]=e^{-t\psi(\xi)}$ holds with
\begin{equation*}
\psi(\xi)=S\xi \cdot \xi + \int_{\mathbb{R}^d}(1-\cos (\xi\cdot z))J({\d}z) \ge 0,
\end{equation*}
where $S$ is a symmetric nonnegative definite $d\times d$ matrix and $J$ is a Radon measure on $\mathbb{R}^d \setminus \{0\}$ such that $\int_{\mathbb{R}^d}(1\wedge |z|^2)J({\d}z) <\infty$, which is called the L\'evy measure. If there exists a density $J(z)$ such that $J({\d}z)=J(z){\d}z$, then we call it the L\'evy (jump) intensity of $X$. In particular, when $S=0$ and $J\neq 0$, the process $X$ is said to be a pure jump L\'evy process. It is known that if 
$$
\int_{\mathbb{R}^d}e^{-t\psi(\xi)}{\d}\xi <\infty, \quad \text{for all~ $t>0$},\leqno{\text{\bf (B1)}}
$$
then $X$ possesses {\bf (I)} and {\bf (SF)} (see \cite[Proposition 5]{KS:2013}). So there is a probability density $p_t(x,y)=p_t(y-x,0):=p_t(y-x)$ of $X$. In addition, {\bf (B1)} implies (UC) of the transition semigroup $\{p_t\}_{t\ge 0}$ of $X$. Further, there also exists a probability density  $p_t^D(x,y)$ of $X$ killing upon exiting an open bounded set $D \subset \mathbb{R}^d$ (see \cite[(2.1)]{Kaleta:2015}). 
\vskip 0.1cm
Let us make the following conditions. Here we will use the notation $f \asymp Cg$ which means that $C^{-1}g \le f \le Cg$ with a constant $C$.
\vskip 0.2cm
\noindent
{\bf (B2)}~~There exists a strictly positive L\'evy intensity $J(x)$ of $X$ such that
\begin{enumerate}
\item[(a)] For every $0< r \le 1/2$, there is a constant $C_1=C_1(r) \ge 1$ such that $J(x) \asymp C_1J(y)$ for $r \le |y| \le |x| \le |y|+1$.
\item[(b)] There is a constant $C_2 \ge 1$ such that $J(x) \le C_2J(y)$ for $1/2 \le |y| \le |x|$

\item[(c)] There is a constant $C_3 \ge 1$ such that
\begin{equation*}
\int_{|z-x|>1/2, |z-y|>1/2}J(x-z)J(z-y){\rm d}z \le C_3J(x-y) \quad \text{for}~~|y-x| \ge 1.
\end{equation*}
\end{enumerate}

Let $R^D(x,y)$ be the Green function of $X$ on $D$, $R^D(x,y):=\int_0^\infty p_t^D(x,y){\d}t$ for all $x,y \in D$ and $R^D(x,y)=0$ for $x \notin D$ or $y \notin D$. We make further condition on the Green function of $X$: 
\vskip 0.2cm
\noindent
{\bf (B3)}~~For all $0 < p < q <R\le 1$,  $\sup_{x \in B(0,p)}\sup_{y \in B(0,q)^c}R^{B(0,R)}(x,y) < \infty$.
\vskip 0.2cm
\noindent
Note that the conditions {\bf (B1)}, {\bf (B2)} and {\bf (B3)} are satisfied by a wide class of symmetric L\'evy processes (see \cite[$\S$4]{Kaleta:2015} for some specific classes of symmetric L\'evy processes satisfying these conditions). In the following, let us assume that $X$ satisfies  {\bf (B1)}, {\bf (B2)} and {\bf (B3)}.
\vskip 0.1cm
For a Borel function $V$ on $\mathbb{R}^d$ and a symmetric bounded Borel function $F(x,y)$ on $\mathbb{R}^d \times \mathbb{R}^d$ that vanishes along the diagonal, let $V_F(x):=V(x)+N[1-e^{-F}](x)$. Assume that $V$ and $F$ satisfy the following conditions:
\begin{enumerate}
\item[(d1)] $V \in \mathcal S_K^1(X)$ and $N[|F|] \in \mathcal S_K^1(X)$.
\item[(d2)] $V_F(x) \to \infty$ as $|x| \to \infty$. 
\end{enumerate}
Let us consider the multiplicative functional
\begin{equation*}
Y_t=\exp \left(-A^F_t+\int_0^t N\left[1-e^{-F}\right](X_s){\d}s\right), \quad t<\zeta,
\end{equation*}
and define $\widetilde{X}=(X_t, \widetilde{\mathbb{P}}_x)$ thanks to it 
as the transformed process of $X$ 
by the compensated pure jump Girsanov transform (as in \cite[Section~6]{FOT}, or \cite[Section~62]{Sha:Book}). 
The transition semigroup $\widetilde{p}_t$ of $\widetilde{X}$ is given by $\widetilde{p}_tf(x)=\widetilde{\mathbb{E}}_x[f(X_t), t<\zeta]={\mathbb{E}}_x[Y_tf(X_t), t<\zeta]$ for $f \in \mathfrak{B}_b(\mathbb{R}^d)$. 
The boundedness of $F$ implies that the L\'evy intensity $\widetilde{J}(x)$ of $\widetilde{X}$ satisfies $\widetilde{J}(x) \asymp CJ(x)$. Let us assume that the Green function $\widetilde{R}^{B(0,R)}(x,y)$ of $\widetilde{X}$ satisfies
\begin{enumerate}
\item[(d3)] $\widetilde{R}^{B(0,R)}(x,y) \asymp CR^{B(0,R)}(x,y)$~ for any $x, y\in E$.
\end{enumerate}
For instance, this holds by the conditional gaugeability for $(B(0,R),F)$ (cf. \cite{CS, Song:2006}) when $X$ is a symmetric $\alpha$-stable process with $0<\alpha<2$ and $F$ satisfying 
\begin{equation}
|F(x,y)| \le c|x-y|^\beta,~ (x,y) \in D\times D~~\text{and}~~F(x,y)=0, ~\text{otherwise},
\end{equation}
where $D$ is a compact set of $\mathbb{R}^d$, $c$ and $\beta$ are two positive constants such that $\beta >\alpha$.
Then, under the assumption (d1) above, we see that $\widetilde{X}$ also satisfies the conditions {\bf (B1)}, {\bf (B2)} and {\bf (B3)}. We note by virtue of \cite[Lemma 4.5]{KimKuwae:TAMS} that $V \in \mathcal S_K^1(\widetilde{X})$ and $N[|F|] \in \mathcal S_K^1(\widetilde{X})$ under the assumption (d1). In particular, $V_F \in \mathcal S_K^1(\widetilde{X})$ by the boundedness of $F$.
Further, since the Feynman-Kac semigroup $p_t^{V,F}$ can be expressed as  
\begin{equation*}
p_t^{V,F}f(x)
=\widetilde{\mathbb{E}}_x\left[e^{-\int_0^t V_F(X_s){\d}s}
 f(X_t),~t<\zeta\right]
 :=\widetilde{p}_t^{V_F}f(x), \quad f \in \mathfrak{B}_b(\mathbb{R}^d),
\end{equation*}
we see by the assumption (d1) with an argument based on the proof of \cite[Corollary 3.2]{KKT:2016} (or \cite{KimKuwae:TAMS}) that $p_t^{V,F}$ is a bounded operator on $L^2(\mathbb{R}^d)$ for all $t>0$ and also satisfies {\bf (I)} and {\bf (SF)}. In addition, the operator $p_t^{V,F}$ is compact on $L^2(\mathbb{R}^d)$ for any $t>0$ under the assumption (d2). Thus, 
it follows from the general theory of semigroups that there exists a normalized principal eigenfunction $\phi_0(x):=\phi_0^{V,F}(x)$ on $L^2(\mathbb{R}^d)$, which is bounded continuous and such that $p_t^{V,F}\phi_0 = e^{-\lambda_0 t}\phi_0$ for any $t>0$, where $\lambda_0:=\lambda_0(V,F)$ is a principal eigenvalue of finite multiplicity. $\phi_0$ is called the ground state and can be assumed to be strictly positive in view of {\bf (I)}. Moreover, by virtue of \cite[Theorem 2.7]{Kaleta:2015}, with the the following assumption 
\begin{enumerate}
\item[(d4)] $\lim_{|x|\to \infty}\frac{V_F(x)}{|\log J(x)|}=\infty$,
\end{enumerate} 
the Feynman-Kac semigroup $p_t^{V,F}$ is ground state dominated, that is, there is a constant $C_4>0$ such that for all $t>0$ 
\begin{equation}\label{eq_def_GSD}
p_t^{V,F}1(x) \le C_4 \phi_0(x), \quad x \in \mathbb{R}^d.
\end{equation}
This property of ground state domination \eqref{eq_def_GSD} is equivalent,  under {\bf (B1)}, to the intrinsic ultracontractivity 
of $\{p_t^{V,F}\}_{t\ge 0}$, which we denoted as assumption {\bf (A3)}.

Clearly, if the assumptions (d1), (d2) and (d4) hold for $V$ and $F$, and the assumption (d3) holds for $F$, they also hold for $\theta V$ and $\theta F$ for any $\theta > 0$.
Hence, $\{p_t^{\theta V, \theta F}\}_{t>0}$ is (IUC) for any $\theta > 0$.
In the case of $\theta < 0$,  as required in {\bf (A3)$_\theta$},
(IUC) of $\{p_t^{\theta V, \theta F}\}_{t>0}$ holds when we suppose (d2) and (d4) with $-\infty$ in place of $\infty$.

\appendix
\section{Appendix}\label{appn} 

In this appendix, we give the proofs of Lemma \ref{Moment2} and Lemma \ref{JumpWLLN}.
\vskip 0.1cm
\begin{proof}[Proof of Lemma~\ref{Moment2}]
		We first assume that $V$ is nonnegative. The equalities below follow from (3.3) 
		for any $x \in E$, with the change of variables $s= pt$ and $s+r =qt$:
		\begin{equation}\label{eq_dec_V2}
			\begin{split}
				{\mathbb{E}}_{x|t}^{\mu, F}\left[\left(\frac{1}{t}A_t^V\right)^2\right] &=\frac{2\varPhi_t(x)^{-1}}{t^2}\int_0^t \int_0^{t-s} \widehat p^{\mu, F}_s\left(V\cdot \widehat p^{\mu, F}_r(V\cdot \varPhi_{t-s-r})\right)(x) {\d}r{\d}s \\
				&=2\int_0^1\int_p^1 \varPhi_t(x)^{-1}
				\widehat p^{\mu, F}_{pt}\left(V\cdot \widehat p^{\mu, F}_{(q-p)t}\left(V\cdot \varPhi_{(1-q) t}\right)\right)(x){\d}q{\d}p.
			\end{split}
		\end{equation}
		For any $\epsilon>0$, we deduce the following upper-bound of the integrant for large enough $t>0$ such that $t \ge \max\{\varepsilon/p, \varepsilon/(q-p), \varepsilon/(1-q)\}$, similarly as for \eqref{eq_L1_bound}:
		\begin{equation*}
			\left\|\varPhi_t^{-1}\,\widehat p^{\mu, F}_{pt}\left(V\cdot \widehat p^{\mu, F}_{(q-p)t}\left(V\cdot \varPhi_{(1-q) t}\right)\right)\right\|_{L^\infty (E;\phi_0^2\mathfrak{m})} \le 2 c_\varepsilon^3  \int \phi_0\d\mathfrak{m} \cdot \|V\|^2_{L^1(E; \phi_0^2\mathfrak{m})}\,.
		\end{equation*}
		By applying Lemma~\ref{QED1}$(2)$ 
		with $f=g=V$, and the dominated convergence theorem, we see then
		\begin{equation}\label{eq_lim_wo_bnd}
			\left(\int_E V\phi_0^2\d\mathfrak{m}\right)^2
			= 2 \lim_{t\to \infty}\int_{\varepsilon/t}^{1-\varepsilon/t}\int_{
				p + \varepsilon/t}^{1-\varepsilon/t}\varPhi_t(x)^{-1}\widehat p^{\mu, F}_{pt}\left(V\cdot \widehat p^{\mu, F}_{(q-p)t}\left(V\cdot \varPhi_{(1-q) t}\right)\right)(x) {\d}q\, {\d}p\,.
		\end{equation}
For a signed function $V$ that is decomposed between its positive part  $V_+ = (|V| + V)/2$ and its negative part $V_- = (|V| - V)/2$,
			we see that the same reasoning justifies \eqref{eq_lim_wo_bnd} for $V_+, V_-$ and $V_++V_-$ instead of $V$.
			Due to the identity $(a-b)^2 = 2a^2 + 2b^2 - (a+b)^2$,
			applied to $a= \int_E V_+\phi_0^2\d\mathfrak{m}$
			and $b= \int_E V_-\phi_0^2\d\mathfrak{m}$,
			and  linearity properties,
			we deduce \eqref{eq_lim_wo_bnd} for any signed function $V \in L^1(E;\phi_0^2\mathfrak{m})$.

	We shall prove that the error between the above integral and the similar one in \eqref{eq_dec_V2}
			gets negligible as $t$ tends to infinity. 
			We first prove this property in what follows for functions $V$ that are nonnegative. We will then apply this result to $|V|$ instead of $V$ to extend the convergence for any signed function $V$.

		\vskip 0.1cm
			There are six boundary terms that are associated with the following subsets of $[0, 1]^2$: 
			\begin{equation*}
				\begin{split}
					&\mathfrak B^1(\varepsilon/t):=\left\{(p, q) \mid p\in [0, \varepsilon/t],~ q-p\ge \varepsilon/t,~ \text{and}~ 1-q\ge \varepsilon /t\right\}, \\
					&\mathfrak B^2(\varepsilon/t):=\left\{(p, q) \mid p\ge \varepsilon/t,~ q-p\ge \varepsilon/t,~ \text{and}~ 1-q\in [0, \varepsilon/t]\right\}, \\
					&\mathfrak B^3(\varepsilon/t):=\left\{(p, q) \mid p\in [0, \varepsilon/t],~ q-p\ge \varepsilon/t,~ \text{and}~ 1-q\in [0, \varepsilon/t]\right\}, \\
					&\mathfrak B^4(\varepsilon/t):=\left\{(p, q) \mid p\in [0, \varepsilon/t],~ q-p\in [0, \varepsilon/t],~ \text{and}~ 1-q\ge \varepsilon/t\right\}, \\
					&\mathfrak B^5(\varepsilon/t):=\left\{(p, q) \mid p\ge \varepsilon/t,~ q-p\in [0, \varepsilon/t],~ \text{and}~ 1-q\ge \varepsilon/t\right\}, \\
					&\mathfrak B^6(\varepsilon/t):=\left\{(p, q) \mid p\ge \varepsilon/t,~ q-p\in [0, \varepsilon/t],~ \text{and}~ 1-q\in [0, \varepsilon/t]\right\},.
				\end{split}
			\end{equation*}

		In the case where $(p, q) \in \mathfrak B^1(\varepsilon/t)$,
		we exploit that $\phi_0^{-1}\, \widehat p_{(q-p)t}^{\mu, F}(V\cdot  \varPhi_{(1-q)t})
		=p_{(q-p)t}^{\phi_0}(V \cdot \varPhi_{(1-q)\,t}\cdot \phi_0^{-1})$
		is uniformly upper-bounded by $c_\varepsilon^2\int_E\phi_0{\rm d}\mathfrak m \cdot\|V\|_{L^1(E; \phi_0^2\d\mathfrak{m})}$. Then, the dependency of $q$ in the integrant has been removed and the integral over $q$ can be upper-bounded by $1$. Integrating over $p$ leads to the following upper-bound:
		\begin{equation}\label{eq_B1_eps}
			\begin{split}
				&\iint_{\mathfrak B^1(\varepsilon/t)}\varPhi_t(x)^{-1}
				\widehat p^{\mu, F}_{pt}\left(V\cdot \widehat p^{\mu, F}_{(q-p)t}\left(V\cdot \varPhi_{(1-q) t}\right)\right)(x)\, {\d}q\, {\d}p \\
				&\quad \le \frac{2}{\int_E \phi_0{\d}\mathfrak{m}}\iint_{\mathfrak B^1(\varepsilon/t)}\phi_0(x)^{-1}
				\widehat p^{\mu, F}_{pt}\left(V\cdot \widehat p^{\mu, F}_{(q-p)t}\left(V\cdot \varPhi_{(1-q) t}\right)\right)(x)\, {\d}q\, {\d}p \\
				&\quad \le \frac{2c_\varepsilon^2}t \cdot\|V\|_{L^1(E; \phi_0^2\d\mathfrak{m})}\cdot {\mathbb{E}}^{\phi_0}_x\left[A_\varepsilon^V\right].
			\end{split}
		\end{equation}
		When $(p, q) \in \mathfrak B^2(\varepsilon/t)$, we look for an upper-bound of the $L^1(E;\phi_0^2\mathfrak{m})$-norm 
		of $\phi_0^{-1}\,V\cdot \varPhi_{(1-q) t}$ integrated over all $q\in [1-\varepsilon/t, 1]$, with the change of variable $r = (1-q)t$:
		\begin{equation*}
			\begin{split}
				\left\|\int_{1-\varepsilon/t}^1 \phi_0^{-1} V\cdot \varPhi_{(1-q)t} \d q\right\|_{L^1(E;\phi_0^2\mathfrak{m})}&=\frac{1}{t}\int_E \int_0^\varepsilon \phi_0(y)^{-1}V(y)\varPhi_r(y) \phi_0^2(y)\,{\d}r\,\mathfrak{m}(\d y) \\
				&=\frac{1}{t}\int_E \int_0^\varepsilon V(y)\, p_r^{\phi_0}(\phi_0^{-1})(y)\, \phi_0^2(y)\,{\d}r\, \mathfrak{m}(\d y) \\
				&=\frac{1}{t}\int_E \int_0^\varepsilon p_r^{\phi_0}(V)(y)\, \phi_0(y)\,{\d}r\, \mathfrak{m}(\d y) \\
				&=\frac{1}{t}\int_E {\mathbb E}_y^{\phi_0}\left[A_\varepsilon^{V}\right]\, \phi_0(y)\mathfrak{m}(\d y),
			\end{split}
		\end{equation*}
		where we used the symmetry of $p^{\phi_0}_{r}$ for the third line.
		Since $(q-p) t\ge \varepsilon$, this entails that
		\begin{equation}\label{eq_sep_Bound}
			\left\|\phi_0^{-1}\,\widehat p^{\mu, F}_{(q-p)t}\left(\int_{1-\varepsilon/t}^{1} V\cdot \varPhi_{(1-q) t}\, {\d}q\right)\right\|_{L^\infty(E; \phi_0^2\mathfrak m)}
			\le \frac{c_\epsilon}{t}\, \int_E {\mathbb{E}}^{\phi_0}_y\left[A_\varepsilon^{V}\right] \phi_0(y)\mathfrak m({\rm d}y)\,.
		\end{equation}
		By linearity and since $pt\ge \varepsilon$, we thus deduce that
		\begin{equation}\label{eq_B2_eps}
			\begin{split}
				&\iint_{\mathfrak B^2(\varepsilon/t)}\varPhi_t(x)^{-1}
				\widehat p^{\mu, F}_{pt}\left(V\cdot \widehat p^{\mu, F}_{(q-p)t}\left(V\cdot \varPhi_{(1-q) t}\right)\right)(x)\, {\d}q\, {\d}p \\
				&\quad \le \frac{2}{\int_E \phi_0{\d}\mathfrak{m}}\iint_{\mathfrak B^2(\varepsilon/t)}\phi_0(x)^{-1}
				\widehat p^{\mu, F}_{pt}\left(V\cdot \widehat p^{\mu, F}_{(q-p)t}\left(V\cdot \varPhi_{(1-q) t}\right)\right)(x)\, {\d}q\, {\d}p \\
				&\quad \le \frac{C_1}t\cdot \|V\|_{L^1(E; \phi_0^2\d\mathfrak{m})}\int_E {\mathbb{E}}^{\phi_0}_y\left[A_\epsilon^{V}\right] \phi_0(y)\,\mathfrak m({\rm d}y),
			\end{split}
		\end{equation}
		where $C_1:=2c_\varepsilon^2/\int_E \phi_0 {\d}\mathfrak{m}$. 
		When $(p, q) \in \mathfrak B^3(\varepsilon/t)$, the estimate \eqref{eq_sep_Bound} is derived similarly as when $(p, q) \in \mathfrak B^2(\varepsilon/t)$.
		We can then handle the remainder as when $(p, q) \in \mathfrak B^1(\varepsilon/t)$, leading to:
		\begin{equation}\label{eq_B3_eps}
			\begin{split}
				&\iint_{\mathfrak B^3(\varepsilon/t)}\varPhi_t(x)^{-1}
				\widehat p^{\mu, F}_{pt}\left(V\cdot \widehat p^{\mu, F}_{(q-p)t}\left(V\cdot \varPhi_{(1-q) t}\right)\right)(x)\, {\d}q\, {\d}p \\
				&\quad \le \frac{C_2}{t}\cdot \int_0^{\varepsilon/t} p_{pt}^{\phi_0}(V)(x) {\d}p \cdot \int_E {\mathbb{E}}^{\phi_0}_y\left[A_\varepsilon^{V, 0}\right] \phi_0(y)\,\mathfrak m({\rm d}y) \\
				&\quad 	= \frac{C_2}{t^2}\cdot{\mathbb{E}}^{\phi_0}_x\left[A_\varepsilon^{V}\right]\cdot\int_E {\mathbb{E}}^{\phi_0}_y\left[A_\varepsilon^{V}\right] \phi_0(y)\,\mathfrak m({\rm d}y),
			\end{split}
		\end{equation}
		where $C_2:=2c_\varepsilon/\int_E \phi_0 {\d}\mathfrak{m}$. 
		When $(p, q) \in \mathfrak B^4(\varepsilon/t)$,
		we first note that $(1-q)t\ge \varepsilon$,
		which means we can get a uniform upper-bound $c_{\varepsilon}\int_E \phi_0{\d}\mathfrak{m}$ of $\phi_0^{-1}\varPhi_{(1-q)t}$ by \eqref{eq_phi_ratio}.
		Besides, $0\le p <q \le 2\varepsilon/t$, so that using the change of variables $r=pt$, 
	$s = qt$: 
		\begin{equation}\label{eq_def_A2eps}
			\begin{split} 
				&\iint_{\mathfrak B^4(\varepsilon/t)}\phi_0(x)^{-1}
				\widehat p^{\mu, F}_{pt}\left(V\cdot \widehat p^{\mu, F}_{(q-p)t}\left(V\cdot \phi_0\right)\right)(x)\, {\d}q\, {\d}p \\
				&\quad \le \frac{1}{t^2}\int_0^{\varepsilon} \int_r^{2\varepsilon} p^{\phi_0}_r\left(V\cdot p^{\phi_0}_{s-r}(V)\right)(x)\, {\d}s \, {\d}r
					\le \frac{1}{2t^2}{\mathbb{E}}^{\phi_0}_x\left[\left(A_{2\epsilon}^{V}\right)^2\right].
			\end{split}
		\end{equation}
		It leads us to the following upper-bound:
		\begin{equation}\label{eq_B4_eps}
			\iint_{\mathfrak B^4(\varepsilon/t)}\varPhi_t(x)^{-1}
			\widehat p^{\mu, F}_{pt}\left(V\cdot \widehat p^{\mu, F}_{(q-p)t}\left(V\cdot \varPhi_{(1-q) t}\right)\right)(x)\, {\d}q\, {\d}p
			\le \frac{c_\epsilon}{t^2}\cdot  {\mathbb{E}}^{\phi_0}_x\left[\left(A_{2\epsilon}^{V}\right)^2\right]\,.
		\end{equation}
		When $(p, q) \in \mathfrak B^5(\varepsilon/t)$,
		$(1-q)t\ge \epsilon$, so that $c_{\varepsilon}\int_E \phi_0{\d}\mathfrak{m}$ is again a uniform upper-bound of $\phi_0^{-1}\varPhi_{(1-q)t}$. We look for an upper-bound of the $L^1(E;\phi_0^2\mathfrak{m})$-norm 
		of $\phi_0^{-1} V\cdot \widehat p^{\mu, F}_{(q-p)t}(V\cdot \phi_0)$ integrated over all $q\in [p, p+\varepsilon/t]$, with the change of variable $r = (q-p)t$:
		\begin{equation}\label{eq_L1_B5}
			\begin{split}
				\left\|\int_p^{p+\varepsilon/t}\phi_0^{-1} V\cdot \widehat{p}_{(q-p)t}^{\mu,F}(V\cdot \phi_0)\,{\d}q\right\|_{L^1(E;\phi_0^2\mathfrak{m})}&=\frac{1}{t}\int_0^{\varepsilon}\int_E	V(y)\, p^{\phi_0}_{r}(V)(y)\,\phi_0^2(y) \mathfrak m({\rm d}y)\,{\rm d}r.
			\end{split}
		\end{equation}
		On the other hand, since the measure $\phi_0^2 \mathfrak m$ is invariant by the semigroup $\{p^{\phi_0}_t\}_{t\ge 0}$,
		\begin{align*}
			\int_0^{\varepsilon}\int_E	V(y)\, p^{\phi_0}_{r}(V)(y)\,\phi_0^2(y)\, \mathfrak m({\rm d}y)\,{\rm d}r &= \frac{1}{\varepsilon}\int_0^{\varepsilon}\int_0^{\varepsilon}\int_E V(y)\, p^{\phi_0}_{r}(V)(y)\,\phi_0^2(y)\, \mathfrak m({\rm d}y)\,{\rm d}r{\d}s \\
			&\le \frac{1}{\varepsilon}\int_0^{2\varepsilon}\int_0^{2\varepsilon-s}\int_E p_s^{\phi_0}\left(V\cdot p^{\phi_0}_{r}V\right)(y)\phi_0^2(y)\, \mathfrak m({\rm d}y)\,{\rm d}r\, {\d}s \\
			&=\frac{1}{2\varepsilon}\int_E {\mathbb E}_y^{\phi_0}\left[\left(A_{2\varepsilon}^{V}\right)^2\right]\phi_0^2(y)\mathfrak{m}(\d y).
		\end{align*}
		We thus arrive at the following upper-bound by considering $pt\ge \varepsilon$:
		\begin{equation}\label{eq_B5_eps}
			\begin{split}
				&\iint_{\mathfrak B^5(\varepsilon/t)}\varPhi_t(x)^{-1}	\widehat p^{\mu, F}_{pt}\left(V\cdot \widehat p^{\mu, F}_{(q-p)t}\left(V\cdot \varPhi_{(1-q) t}\right)\right)(x)\, {\d}q\, {\d}p \\
				&\quad \le 2c_{\varepsilon} \iint_{\mathfrak B^5(\varepsilon/t)}\phi_0(x)^{-1}\widehat p^{\mu, F}_{pt}\left(V\cdot \widehat p^{\mu, F}_{(q-p)t}\left(V\cdot \phi_0\right)\right)(x)\, {\d}q\, {\d}p \\
				&\quad \le 2c_{\varepsilon}^2\int_{\varepsilon/t}^{1-\varepsilon/t}\left\|\int_p^{p+\varepsilon/t}\phi_0^{-1}\cdot V\cdot \widehat{p}_{(q-p)t}^{\mu,F}(V\cdot \phi_0)\,{\d}q\right\|_{L^1(E;\phi_0^2\mathfrak{m})}{\d}p \\
				&\quad \le \frac{C_3}{t}\cdot \int_E {\mathbb{E}}^{\phi_0}_y\left[\left(A_{2\varepsilon}^{V}\right)^2\right] \phi_0(y) \mathfrak m({\rm d}y),
			\end{split}
		\end{equation}
		where $C_3:=c_\varepsilon^2\|\phi_0\|_\infty /\varepsilon$.
		Lastly, when $(p, q) \in \mathfrak B^6(\varepsilon/t)$,
		we adapt \eqref{eq_L1_B5} without the replacement of $\varPhi_{(1-q)t}$ by $\phi_0$, together with the change of variables $r = (1-p)t$ then $s=(1-q)t$:
			\begin{align*}
				&\left\|
				\int_{1-\varepsilon/t}^1
				\int_{1-2\varepsilon/t}^q \phi_0^{-1} V\cdot \widehat{p}_{(q-p)t}^{\mu,F}(V\cdot \varPhi_{(1-q)t})\, {\d}p\,{\d}q\right\|_{L^1(E;\phi_0^2\mathfrak{m})}
				\\&\qquad=\frac{1}{t^2}\int_0^{\varepsilon}\int_s^{2\varepsilon}\int_E	V(y)\, p^{\phi_0}_{r-s}\left(V\cdot p_{s}^{\phi_0}(\phi_0^{-1})\right)(y)\,\phi_0^2(y) \mathfrak m({\rm d}y)\,{\rm d}r\,{\rm d}s
				\\&\qquad = \frac{1}{t^2}
				\int_0^{\varepsilon}\int_s^{2\varepsilon}\int_E	p^{\phi_0}_{s}\left(V p^{\phi_0}_{r-s}(V)\right)(y)\,\phi_0(y) \mathfrak m({\rm d}y)\,{\rm d}s\,{\rm d}r\,
				\\&\qquad \le \frac{1}{2 t^2} \int_E {\mathbb{E}}^{\phi_0}_y\left[\left(A_{2\varepsilon}^{V}\right)^2\right] \phi_0(y) \mathfrak m({\rm d}y)\,.
			\end{align*}
			Since $pt\ge \epsilon$, we thus conclude to the following upper-bound:
			\begin{equation}\label{eq_B6_eps}
				\iint_{\mathfrak B^6(\varepsilon/t)}\varPhi_t(x)^{-1}
				\widehat p^{\mu, F}_{pt}\left(V\cdot \widehat p^{\mu, F}_{(q-p)t}\left(V\cdot \varPhi_{(1-q) t}\right)\right)(x)\, {\d}q\, {\d}p
				\le \frac{C_2}{2 t^2}\,\int_E {\mathbb{E}}^{\phi_0}_y\left[\left(A_{2\varepsilon}^{V}\right)^2\right] \phi_0(y) \mathfrak m({\rm d}y)\,,
			\end{equation}
			where we recall $C_2 = 2c_\varepsilon/\int_E \phi_0 {\d}\mathfrak{m}$.
	\vskip 0.1cm
		We now recall Lemma~\ref{lem_AF_id},
		in which we demonstrated
		that $V\in \mathcal S^1_D(X)$
		entails property \eqref{eq_bd_AfGphi0}.
		We see for each of the terms in \eqref{eq_B1_eps}, \eqref{eq_B2_eps}, \eqref{eq_B3_eps}, \eqref{eq_B4_eps}, \eqref{eq_B5_eps}, \eqref{eq_B6_eps}
		that they tend to 0 as $t\to \infty$.
		We assumed the function $V$ to be nonnegative to establish these convergences, yet they directly extend for any signed function $V$ with the bound given by $|V|$ instead of $V$.
		We thus conclude Lemma~\ref{Moment2} by additionally recalling \eqref{eq_lim_wo_bnd}.
\end{proof}

\begin{proof}[Proof of Lemma~\ref{JumpWLLN}]
We first assume that $G$ is nonnegative.Thanks to the expression \eqref{tg2}, 
		we have for  $x \in E$, 
		\begin{equation*}
			\begin{split}
				&{\mathbb{E}}_{x|t}^{\mu, F}\left[\left(\frac{1}{t}A_t^G\right)^2\right] \\
				&\quad = \frac{1}{t}{\mathbb{E}}_{x|t}^{\mu, F}\left[A_t^{G^2}\right]	
				+	\int_0^1\int_p^1  
				\varPhi_t^{-1}(x)\,\widehat p^{\mu, F}_s \left(N\Big[G e^{-F}\cdot \widehat p^{\mu, F}_r\big(N\left[G e^{-F}\cdot \varPhi_{t-s-r}\right]\big)\Big]\right)(x)\,{\d}p\,{\d}q\,.
			\end{split}
		\end{equation*}
		The first term of the above right-hand side  converges to $0$ as $t \to \infty$, due to Theorem~\ref{JumpErgodic},
		Lemma~\ref{lem_AF_id} and \eqref{eq_prop_L1phi}, since $N[G^2], N[(G^2)^T] \in L^1(E;\mathfrak{m})\cap \mathcal S_D^1(X)$.
		The second term is handled similarly as for Lemma~\ref{Moment2}.
		The integrant $\Xi^G_t(p, q, x)$ is upper-bounded for any $t \ge \max\{\varepsilon/p, \varepsilon/(q-p), \varepsilon/(1-q)\}$:
		\begin{equation*}
			\Xi^G_t(p, q, x)
			= \frac{\phi_0(x)}{\varPhi_t(x)} p^{\phi_0}_{pt}\left(N^{\phi_0}\left[G\cdot  p^{\phi_0}_{(q-p)t}\left(N^{\phi_0}[G\,\phi_0^{-1}\, \varPhi_{(1-q) t}]\right)\right]\right)(x)
			\le 2 c_\varepsilon^3 \cdot \|N^{\phi_0}[G]\|^2_{L^1(E; \phi_0^2\mathfrak{m})}\,.
		\end{equation*}
		By the dominated convergence theorem,
		the assertion of Lemma \ref{QED1}$(4)$ with $g=1$ and $K=G$ implies the following convergence:
		\begin{equation}\label{eq_limG_wo_bnd}
			\left(\iint_{E\times E}G(y,z){\mathcal J}_{\phi_0}({\rm d}y{\rm d}z)\right)^2
			\\	= 2 \lim_{t\to \infty} \int_{\varepsilon/t}^{1-\varepsilon/t}\int_{
				p + \varepsilon/t}^{1-\varepsilon/t}\Xi^G_t(p, q, x)\, {\d}p\, {\d}q\,.
		\end{equation}
		The convergence is extended for signed functions $G$ by exploiting again the identity 
		$(a-b)^2 = 2a^2 + 2b^2 - (a+b)^2$,
		and linearity properties of $N$ and $\{\widehat p_s^{\mu, F}\}_{s\ge 0}$.
		\vskip 0.1cm
		We exploit the same decomposition of the boundary terms as for Lemma~\ref{Moment2} and similar arguments to deduce the following upper-bounds,
		where we first assume for simplicity that $G$ is nonnegative:
		\begin{align}
			\label{eq_B1_G}
			\iint_{\mathfrak B^1(\varepsilon/t)}
			\Xi^G_t(p, q, x)\, {\d}p\, {\d}q
			&\le \frac{2 c_\varepsilon^2}t \cdot\|N^{\phi_0}[G]\|_{L^1(E; \phi_0^2\d\mathfrak{m})}\cdot {\mathbb{E}}^{\phi_0}_x\left[A_\varepsilon^{G}\right],
			\\ \label{eq_B2_G}
			\iint_{\mathfrak B^2(\varepsilon/t)}
			\Xi^G_t(p, q, x)\, {\d}p\, {\d}q
			&\le \frac{C_1}t\, \cdot\|N^{\phi_0}[G]\|_{L^1(E; \phi_0^2\d\mathfrak{m})}\cdot \int_E{\mathbb{E}}^{\phi_0}_y\left[A_\varepsilon^{G^T}\right]\phi_0(y) \mathfrak m({\rm d}y)\,,
			\\ \label{eq_B3_G}
			\iint_{\mathfrak B^3(\varepsilon/t)}
			\Xi^G_t(p, q, x)\, {\d}p\, {\d}q
			&\le \frac{C_2}t\,\cdot {\mathbb{E}}^{\phi_0}_x\left[A_\varepsilon^{G}\right]\cdot\int_E{\mathbb{E}}^{\phi_0}_y\left[A_\varepsilon^{G^T}\right]\phi_0(y) \mathfrak m({\rm d}y)\,,
			\\ \label{eq_B4_G}
			\iint_{\mathfrak B^4(\varepsilon/t)}
			\Xi^G_t(p, q, x)\, {\d}p\, {\d}q
			&\le \frac{c_\varepsilon}{t^2}\,\int_E\phi_0{\rm d}\mathfrak m \cdot  {\mathbb{E}}^{\phi_0}_x\left[\left(A_{2\varepsilon}^{G}\right)^2\right]\,,
			\\ \label{eq_B5_G}
			\iint_{\mathfrak B^5(\varepsilon/t)}
			\Xi^G_t(p, q, x)\, {\d}p\, {\d}q
			&\le \frac{C_3}{t}\cdot \int_E {\mathbb{E}}^{\phi_0}_y\left[\left(A_{2\varepsilon}^{G}\right)^2\right]\phi_0(y)\,\mathfrak m(\d y)\,,
			\\ \label{eq_B6_G}
			\iint_{\mathfrak B^6(\varepsilon/t)}
			\Xi^G_t(p, q, x)\, {\d}p\, {\d}q
			&\le \frac{C_2}{2 t^2}\, \cdot \int_E {\mathbb{E}}^{\phi_0}_y\left[\left(A_{2\varepsilon}^{G^T}\right)^2\right]\phi_0(y)\,\mathfrak m(\d y)\,.
		\end{align}
		Let us provide some further details regarding the proof of   \eqref{eq_B6_G}, while leaving 
		the adaptation of \eqref{eq_B1_eps}, \eqref{eq_B2_eps}, \eqref{eq_B3_eps}, \eqref{eq_B4_eps}, \eqref{eq_B5_eps}
		into \eqref{eq_B1_G} -- \eqref{eq_B5_G}
		to the interested reader.
		The main distinction of   \eqref{eq_B6_G} as compared to \eqref{eq_B6_eps} is that we need additional steps that involve \eqref{eq_sym_Jphi}.
		In particular, for any $r, s\in [0, \varepsilon]$:
		\begin{equation*}
			\begin{split}
				&\int_E	 N^{\phi_0}\left[G\cdot  p^{\phi_0}_{r}\left(N^{\phi_0}[G\cdot p^{\phi_0}_{s}(\phi_0^{-1})]\right)\right](y)\cdot  
				\phi_0^2(y) \mathfrak m({\rm d}y)
				\\&\qquad= \int_E	
				p^{\phi_0}_{r}\left(N^{\phi_0}[G\cdot p^{\phi_0}_{s}(\phi_0^{-1})]\right)(y)\cdot
				N^{\phi_0}[G^T](y)\cdot 
				\phi_0^2(y) \mathfrak m({\rm d}y)
				\\&\qquad=\int_E	
				N^{\phi_0}[G\cdot p^{\phi_0}_{s}(\phi_0^{-1})](y)\cdot
				p^{\phi_0}_{r}\left(N^{\phi_0}[G^T]\right)(y)\cdot 
				\phi_0^2(y) \mathfrak m({\rm d}y)
				\\&\qquad= \int_E	
				p^{\phi_0}_{s}\left(N^{\phi_0}\left[G^T\cdot p^{\phi_0}_{r}\left(N^{\phi_0}[G^T]\right)\right]\right)(y)\cdot
				\phi_0(y) \mathfrak m({\rm d}y)\,.
			\end{split}
		\end{equation*}
		Hence
		\begin{equation*}
			\left\|\int_{1-\varepsilon/t}^1
			\int_{1-2\varepsilon/t}^{q}N^{\phi_0}\left[G\cdot p^{\phi_0}_{(q-p)t}\left(N^{\phi_0}[G\,\phi_0^{-1}\, \varPhi_{(1-q)t}]\right)\right]\,{\rm d}p\,{\rm d}q\right\|_{L^1(E; \phi_0^2  \mathfrak m)}
			\le \frac1{t^2}\int_E {\mathbb{E}}^{\phi_0}_y\left[\left(A_{2\varepsilon}^{G^T}\right)^2\right]\phi_0(y)\,\mathfrak m(\d y)\,.
		\end{equation*}
		\eqref{eq_B6_G} is then deduced similarly as \eqref{eq_B6_eps}
		by exploiting that $pt\ge \varepsilon$.
		\vskip 0.1cm
		By Lemma~\ref{lem_AF_id}  and \eqref{eq_prop_L1phi},
		each of the terms in \eqref{eq_B1_G} -- \eqref{eq_B6_G}
		tend to 0 as $t\to \infty$,
		since $N[G], N[G^T] \in L^1(E;\mathfrak{m})\cap \mathcal S_D^1(X)$.
		This property extends to any signed function $G$ 
		by considering $|G|$ instead of $G$ in the above convergence.
		We complete the proof of Lemma~\ref{Moment2} by also recalling the dominant term in \eqref{eq_limG_wo_bnd}.
	
\end{proof}


\subsection*{Acknowledgments}
The authors would like to thank the anonymous referees, an Associate
Editor and the Editor for their constructive comments that improved the
quality of this paper.


\vskip 1.0cm
\noindent
Daehong Kim \\
Department of Mathematics and Engineering, \\
Graduate School of Science and Technology,
Kumamoto University, \\
Kumamoto, 860-8555, Japan \\
{\tt daehong@gpo.kumamoto-u.ac.jp}
\vskip 0.6cm
\noindent
Takara Tagawa \\
Department of Mathematics and Engineering, \\
Graduate School of Science and Technology,
Kumamoto University, \\
Kumamoto, 860-8555, Japan \\
{\tt 236d9321@st.kumamoto-u.ac.jp}
\vskip 0.6cm
\noindent
Aur\'elien Velleret \\
LaMME, UMR CNRS 8071, \\
Université d'Evry Val d'Essonne, Paris Saclay, \\
23 Boulevard de France, 91037 Evry, France\\
{\tt velleret@phare.normalesup.org}

\end{document}